\theoremstyle{plain}
\newtheorem{thm}{Theorem}[section]
\newtheorem*{conj}{Conjecture}
\newtheorem{lem}[thm]{Lemma}
\newtheorem{prop}[thm]{Proposition}
\newtheorem{cor}[thm]{Corollary}
\theoremstyle{definition}
\newtheorem*{quest}{Open problem} 
\theoremstyle{remark}
\newtheorem{rem}[thm]{Remark}
\numberwithin{equation}{section}
\newcommand{\R}{\mathbb{R}}
\newcommand{\Z}{\mathbb{Z}}
\newcommand{\Vk}{\mathcal{V}_k^\nabla(\R^n)}
\newcommand{\Pk}{\mathcal{P}_k^\nabla(\R^n)}
\newcommand{\Prop}{\mathcal{P}^\nabla}
\newcommand{\V}{\mathcal{V}^\nabla}
\newcommand{\Vsq}[2]{\mathcal{V}_{#1}^\nabla[\R^{#2}]}
\newcommand{\Psq}[2]{\mathcal{P}_{#1}^\nabla[\R^{#2}]}
\newcommand{\restrictionmap}[2]{{#1}\mathpunct\restriction\hbox{}_{#2}}
\providecommand{\abs}[1]{\left\lvert#1\right\rvert}
\newcommand{\wt}{\widetilde}
\newcommand{\wh}{\widehat}
\DeclareMathOperator{\id}{id}
\DeclareMathOperator{\rank}{rank}
\DeclareMathOperator{\Hess}{Hess}
\DeclareMathOperator{\sign}{sign}
\DeclareMathOperator{\Ind}{Ind}
\DeclareMathOperator{\cl}{cl}
\DeclareMathOperator{\arc}{arc}
\title{Gradient versus proper gradient homotopies}
\author[P. Bart{\l}omiejczyk]
{Piotr Bart{\l}omiejczyk}
\address{Faculty of Applied Physics and Mathematics,
Gda{\'n}sk University of Technology,
Gabriela Narutowicza 11/12,
80-233 Gda{\'{n}}sk, Poland}
\email{piobartl@pg.edu.pl}
\author[P. Nowak-Przygodzki]
{Piotr Nowak-Przygodzki}
\address{Sopot, Poland}
\email{piotrnp@wp.pl}
\date{\today}
\subjclass[2010]{Primary: 55Q05; Secondary: 55M25}
\keywords{Gradient map, proper map, homotopy.}
\begin{document}

\begin{abstract}
We compare the sets of homotopy classes
of gradient and proper gradient vector fields in the plane.
Namely, we show that gradient and proper gradient 
homotopy classifications are essentially different. 
We provide a complete description of the sets of 
homotopy classes of gradient maps from $\R^n$ to $\R^n$
and proper gradient maps from $\R^2$ to $\R^2$
with the Brouwer degree greater or equal to zero.
\end{abstract}

\maketitle


\section*{Introduction}\label{sec:intro}
The search for new homotopy invariants in the class of gradient maps
has a long history. In 1985, to obtain new bifurcation results, 
E.~N.~Dancer \cite{D} introduced a new topological invariant
for $S^1$\babelhyphen{hard}equivariant gradient maps. 
In turn, A.~Parusi{\'n}ski \cite{P} showed that if two gradient vector fields
on the unit disc $D^n$ nonvanishing on $S^{n-1}$ are homotopic,
i.e., have the same Brouwer degree,
then they are also gradient homotopic.
Similarly, the authors of this paper proved in \cite{BP1,BP2}
that there is no better invariant than the Brouwer degree
for gradient and proper gradient otopies in $\R^n$.
Recall that otopy was introduced by J.~C.~Becker and D.~H.~Gottlieb \cite{BG}
as a very useful generalization of the concept of homotopy.

However, quite surprisingly, M.~Starostka \cite{S} showed
that for $n\ge2$ there exist proper gradient vector fields in $\R^n$
which are homotopic but not proper gradient homotopic.
Roughly speaking, he proved that the identity and the minus identity on the plane
are not proper gradient homotopic and then generalized this result to $\R^n$.
Since his reasoning is nice and elegant, we will present it briefly here.
First recall that the linear source and sink in the plane are isolated
invariant sets with different homological Conley indices.
Namely, since $(D_r(0),\partial D_r(0))$ and $(D_r(0),\emptyset)$,
where $D_r(0)$ denotes the $r$-disc at the origin,
are index pairs for the source and sink respectively,
the respective homological Conley indices are
(see Figure~\ref{fig:conley})
\[
\text{CH}_*(D_r(0),\eta_{\id})=H_*(S^2,\text{pt})
\,\,\text{ and }\,\,
\text{CH}_*(D_r(0),\eta_{-\id})=H_*(S^0,\text{pt}),
\]
where $\eta_f$ denotes the flow generated by the vector field $f$.
Suppose now that there is a proper gradient homotopy connecting $\id$ to $-\id$.
Such a homotopy determines a continuation between the gradient flows $\eta_{\id}$ 
and $\eta_{-\id}$ for which a sufficiently large disc $D_r(0)$
is a common isolating neighbourhood for all parameter values of the continuation
(this is true for proper gradient vector fields and homotopies).
Thus, by the continuation property of the Conley index, 
$\text{CH}_*(D_r(0),\eta_{\id})=\text{CH}_*(D_r(0),\eta_{-\id})$, a contradiction.
To summarize, if we restrict ourselves to proper gradient vector fields and homotopies,
then the Conley index is a better invariant than the Brouwer degree.
\begin{figure}[ht]
\centering
\includegraphics[scale=1.4,trim= 62mm 218mm 70mm 45mm]{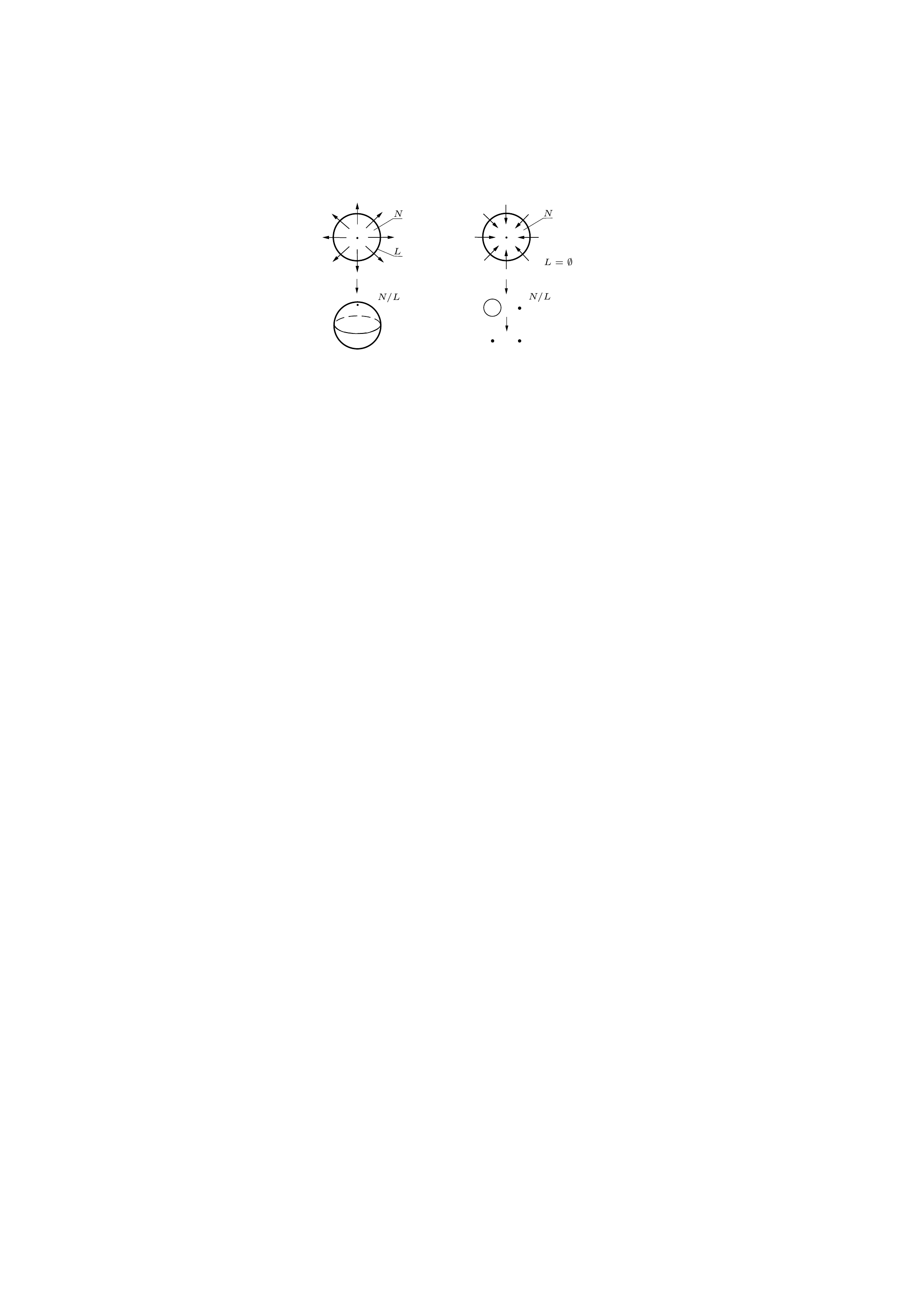}
\caption{Conley indices of a source and sink}
\label{fig:conley}
\end{figure}

In this paper we strengthen and complement Starostka's result.
We present the comparison of two homotopy classifications
of gradient vector fields in the plane: gradient and proper gradient.
Namely, we show that the set of homotopy classes of gradient
vector fields in $\R^n$ having the same Brouwer degree 
is a singleton (a Parusi{\'n}ski-type theorem).
On the other hand, the set of homotopy classes of proper gradient
vector fields in $\R^2$ the same Brouwer degree is empty 
if the degree is greater than $1$, has exactly two elements
if the degree is equal to $1$ and has one element 
if the degree is equal to $0$.
What is still lacking is a description of this set
for the degree less than $0$.  It also would be desirable to
provide the proper gradient homotopy classification 
for the general case of $\R^n$.

The organization of the paper is as follows.
Section~\ref{sec:prel} contains some preliminaries. 
Our main four theorems are stated in Section~\ref{sec:main}.
These results are proved subsequently 
in Sections~\ref{sec:proof1}-\ref{sec:proof4}.
Finally, Appendix~\ref{sec:appA} presents a series
of technical results needed in previous sections.


\section{Preliminaries}
\label{sec:prel}

In what follows, a map denotes always a continuous function
and $\deg$ denotes the classical Brouwer degree.
\subsection{Gradient and proper gradient maps}
Recall that a map $f$ is called \emph{gradient} 
if there is a~$C^1$~function
$\varphi\colon\R^n\to\R$ such that $f=\nabla\varphi$
and is called \emph{proper} if preimages of 
compact sets for $f$ are compact.
Let $I:=[0,1]$.

We write $f\in\V(\R^n)$ ($f\in\Prop(\R^n)$) if
\begin{enumerate}
	\item $f$ is gradient, 
	\item $f^{-1}(0)$ is compact ($f$ is proper).
\end{enumerate}
Moreover, let $\Vk:=\{f\in\V(\R^n)\mid\deg f=k\}$ and
$\Pk:=\{f\in\Prop(\R^n)\mid\deg f=k\}$. 

\subsection{Gradient and proper gradient homotopies}
Apart from maps we consider two classes of homotopies:
gradient and proper gradient.
Namely, a map $h\colon I\times\R^n\to\R^n$ is called
a \emph{(proper) gradient homotopy} if
\begin{enumerate}
	\item $h_t(\cdot):=h(t,\cdot)$ is gradient for each $t\in I$,
	\item $h^{-1}(0)$ is compact ($h$ is proper).
\end{enumerate}
If $h$ is a (proper) gradient homotopy, 
we say that $h_0$ and $h_1$ are \emph{(proper) gradient homotopic}. 
The relation of being (proper) gradient homotopic 
is an equivalence relation in $\Vk$ ($\Pk$).
The sets of homotopy classes of the respective relation
will be denoted by $\Vsq{k}{n}$ and $\Psq{k}{n}$.

\subsection{Hessian maps}
Let us consider a $C^2$ function $\varphi\colon\R^2\to\R$. 
Assume that $p\in\R^2$ is a nondegenerate critical point of $\varphi$. 
Let $\Hess_p\!\varphi$ denote the Hessian of $\varphi$ at $p$. 
In that situation, the Hessian is nondegenerate bilinear symmetric form and, 
in consequence, its matrix is invertible symmetric.  
Let us make two simple observations.
\begin{lem}\label{lem:symm}
Any two elements of the space of invertible symmetric matrices
are in the same path-connected component
if and only if they have the same signature.
\end{lem}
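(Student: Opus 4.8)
The plan is to prove Lemma~\ref{lem:symm} by first understanding the topology of the space $\mathrm{Sym}^\times(2)$ of invertible symmetric $2\times 2$ matrices, and then matching its path-components with the signature invariant. I would begin by recalling Sylvester's law of inertia: every symmetric matrix $A$ is congruent to a diagonal matrix with entries $\pm 1$ (and $0$), and for invertible $A$ the number of $+1$'s and $-1$'s — i.e.\ the signature — is a congruence invariant. Since $\det A\neq 0$, the signature of a $2\times 2$ invertible symmetric matrix takes one of three values: $(2,0)$, $(0,2)$, or $(1,1)$, corresponding to positive definite, negative definite, and indefinite forms. So there are at most three candidates for path-components, and it remains to show (a) signature is locally constant on $\mathrm{Sym}^\times(2)$, hence constant on path-components, and (b) each of the three signature classes is itself path-connected.

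For (a), the cleanest argument is via eigenvalues: a symmetric matrix has real eigenvalues depending continuously on the matrix, and $A$ is invertible exactly when no eigenvalue is zero; the signature counts the eigenvalues of each sign. Along any continuous path in $\mathrm{Sym}^\times(2)$ the eigenvalues vary continuously and never vanish, so by the intermediate value theorem they cannot change sign; thus the number of positive (resp.\ negative) eigenvalues is constant along the path. This shows matrices in different signature classes cannot be joined.

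For (b), I would exhibit explicit paths. The positive definite matrices form a convex set (a convex combination of positive definite matrices is positive definite), hence path-connected; likewise the negative definite ones. For the indefinite class, signature $(1,1)$: every such matrix is congruent to $\mathrm{diag}(1,-1)$, but congruence alone does not immediately give a path inside $\mathrm{Sym}^\times(2)$, so instead I would argue directly. Given an indefinite $A$, diagonalize it orthogonally as $A=Q\,\mathrm{diag}(\lambda,-\mu)\,Q^{\mathsf T}$ with $\lambda,\mu>0$; the path $t\mapsto Q\,\mathrm{diag}((1-t)\lambda+t,\,-((1-t)\mu+t))\,Q^{\mathsf T}$ stays indefinite and invertible and reaches $Q\,\mathrm{diag}(1,-1)\,Q^{\mathsf T}$, and then I would rotate $Q$ to the identity through $SO(2)$, which is connected, keeping the matrix of the form $R\,\mathrm{diag}(1,-1)\,R^{\mathsf T}$, invertible and of signature $(1,1)$ throughout; the endpoint is $\mathrm{diag}(1,-1)$. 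Hence any two indefinite matrices are joined through $\mathrm{diag}(1,-1)$.

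The main obstacle is the indefinite case: unlike the definite cones it is not convex, so one genuinely needs the spectral decomposition together with the connectedness of $SO(2)$ to move the orthogonal conjugating factor. Everything else — Sylvester's law, continuity of eigenvalues, convexity of the definite cones — is standard linear algebra, so I would state those briefly and spend the bulk of the argument on constructing the path in the signature $(1,1)$ stratum. Combining (a) and (b) yields exactly the stated equivalence: two invertible symmetric matrices lie in the same path-component iff they have the same signature.
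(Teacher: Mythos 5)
Your argument is correct; note that the paper itself offers no proof of this lemma, stating it as a ``simple observation,'' so there is no in-paper argument to compare against. Your two-step scheme --- (a) signature is constant along paths because the (real) eigenvalues of a symmetric matrix depend continuously on the matrix and cannot cross zero while the matrix stays invertible, and (b) each signature stratum is path-connected, via convexity of the definite cones and, for signature $(1,1)$, a spectral path $A=Q\,\mathrm{diag}(\lambda,-\mu)\,Q^{\mathsf T}\rightsquigarrow Q\,\mathrm{diag}(1,-1)\,Q^{\mathsf T}$ followed by contracting $Q$ in $SO(2)$ --- is the standard and complete proof. The only point to tidy up is that the orthogonal diagonalizing matrix $Q$ may a priori lie in the component of $O(2)$ with $\det Q=-1$; this is harmless, since replacing one eigenvector column $v$ by $-v$ leaves $Q\,D\,Q^{\mathsf T}=\sum_i \lambda_i v_iv_i^{\mathsf T}$ unchanged while putting $Q$ into $SO(2)$, so you may always assume $Q\in SO(2)$ before contracting it to the identity. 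With that one-line remark added, the proof is airtight, and it generalizes verbatim to $n\times n$ matrices using the connectedness of $SO(n)$.
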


\begin{cor}\label{cor:symm}
Let $\varphi\colon\R^2\to\R$ and $p$ 
be a nondegenerate critical point of $\varphi$. Then the Hessian map
$\Hess_p\!\varphi\colon\R^2\to\R^2$ is proper gradient homotopic
to $\id_{\R^2}$ if $\sign\Hess_p\!\varphi=0$ or
to $-\id_{\R^2}$ if $\sign\Hess_p\!\varphi=2$.
\end{cor}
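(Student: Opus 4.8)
The plan is to deduce Corollary~\ref{cor:symm} directly from Lemma~\ref{lem:symm}. First I would observe that the Hessian map $A := \Hess_p\!\varphi \colon \R^2 \to \R^2$ is a linear map whose matrix is invertible and symmetric, so $A = \nabla\psi$ where $\psi(x) = \tfrac12 \langle Ax, x\rangle$; thus $A$ is itself a gradient map, and since $A$ is a linear isomorphism it is proper with $A^{-1}(0) = \{0\}$ compact, so $A \in \Prop(\R^2)$.

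Next I would use Lemma~\ref{lem:symm} to connect $A$ to a standard model. If $\sign A = 0$, then $A$ has the same signature as the identity matrix $I$, so by Lemma~\ref{lem:symm} there is a path $t \mapsto A_t$ in the space of invertible symmetric $2\times 2$ matrices with $A_0 = A$ and $A_1 = I$; if $\sign A = 2$, the same lemma gives such a path with $A_1 = -I$. In either case set $h(t,x) := A_t x$. I would then check that $h$ is a proper gradient homotopy: for each fixed $t$, $h_t = A_t = \nabla(\tfrac12 \langle A_t x, x\rangle)$ is gradient; and since each $A_t$ is invertible and the path is continuous on the compact interval $I$, the norms $\|A_t^{-1}\|$ are uniformly bounded, whence $h$ is proper (indeed $h^{-1}(K)$ is a closed subset of $I \times \{x : \|x\| \le C \sup_K \|y\|\}$ for a suitable constant $C$, hence compact), so $h^{-1}(0)$ is compact as well. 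This exhibits $A$ as proper gradient homotopic to $\id_{\R^2}$ or $-\id_{\R^2}$ according to the signature.

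The only mild subtlety — and the step I would treat most carefully — is verifying properness of the constructed homotopy $h$ rather than merely of each slice $h_t$; this is where the continuity of the path and the compactness of $I$ enter, giving the uniform invertibility bound. Everything else is routine: the identification of a linear map as a gradient, and the translation of the path-connectedness statement of Lemma~\ref{lem:symm} into an explicit homotopy. I would remark that the cases $\sign A = -2$ (giving a source, homotopic to $-\id$ again by a sign-flip, or rather to $\id$ after conjugation considerations) are not needed here since a nondegenerate critical point of a function $\R^2 \to \R$ has Hessian of signature $0$ or $\pm 2$, and signature $-2$ yields the same path-component as signature... — in fact the statement as given only records signatures $0$ and $2$, and for signature $-2$ one notes $-A$ has signature $2$; but as the corollary is phrased I only need the two listed cases, each handled by one application of Lemma~\ref{lem:symm}.
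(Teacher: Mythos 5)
Your construction is the right one and is exactly what the paper intends (the paper offers no separate proof; the corollary is meant to follow from Lemma~\ref{lem:symm} precisely as you do it): realize the path of invertible symmetric matrices supplied by Lemma~\ref{lem:symm} as the linear homotopy $h(t,x)=A_tx$, note each slice is $\nabla\bigl(\tfrac12\langle A_tx,x\rangle\bigr)$, and get properness of $h$ itself from the uniform bound on $\|A_t^{-1}\|$ over the compact interval. That last point, which you single out, is indeed the only thing to check.

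There is, however, a genuine error in your case analysis. You write that if $\sign A=0$ then $A$ ``has the same signature as the identity matrix $I$'' --- this is false under the usual meaning of signature, since $\sign I=2$. Worse, for an honestly indefinite $2\times2$ matrix (one positive and one negative eigenvalue) the conclusion you are trying to reach is unattainable: such an $A$ has $\det A<0$, hence Brouwer degree $-1$, while $\id_{\R^2}$ and $-\id_{\R^2}$ both have degree $+1$, and degree is invariant under proper homotopy. So Lemma~\ref{lem:symm} cannot produce the path you invoke in that case, and no argument could. The resolution is that the paper is (somewhat abusively) using $\sign\Hess_p\varphi$ for the Morse index, i.e.\ the number of negative eigenvalues: index $0$ means positive definite (a source of the gradient flow), which lies in the same path component as $I$; index $2$ means negative definite (a sink), in the component of $-I$. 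This reading is confirmed by Proposition~\ref{prop:point}, where the corollary is applied to send sources to $\id_{\R^2}$ and sinks to $-\id_{\R^2}$. With that correction your two applications of Lemma~\ref{lem:symm} are legitimate and the rest of your argument stands; your closing remarks about ``signature $-2$'' and about replacing $A$ by $-A$ should simply be deleted, since negating a map is not a proper gradient homotopy and the indefinite case does not arise in the corollary as intended.
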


\subsection{Local flows}
Let $\Omega$ be an open subset of $R^2$.
A map $\eta\colon A\to\Omega$ is called 
a \emph{local flow} on $\Omega$ if:
\begin{itemize}
	\item $A$ is an open neighbourhood of 
	$\{0\}\times\Omega$ in $\R\times\Omega$,
	\item for each $x\in\Omega$ there are 
	$\alpha_x, \omega_x\in\R\cup\{\pm\infty\}$
	such that 
	$(\alpha_x,\omega_x)$\\ $=\{t\in\R\mid (t,x)\in A\}$,
	\item $\eta(0,x)=x$ and $\eta(s,\eta(t,x))=\eta(s+t,x)$ 
	for all $x\in\Omega$ and $s,t\in(\alpha_x,\omega_x)$ 
	such that $s+t\in(\alpha_x,\omega_x)$.
\end{itemize}
Assume that $f\colon\Omega\to\R^2$ is a $C^1$ vector field.
It is well-known that
if $t\to\eta(t,x_0)$ is a solution of the initial value problem
\[
\dot{x}=f(x),\quad x(0)=x_0
\] 
and $(\alpha_{x_0},\omega_{x_0})$ is the maximal interval of
existence of the solution of the initial value problem, 
then the map $\eta$ is a local flow on $\Omega$. 

\begin{prop}[{\cite[Ch. 6]{H}}]\label{prop:generic}
Any element of $\Prop(\R^2)$ is proper gradient homotopic to a generic map.
\end{prop}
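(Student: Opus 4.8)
The plan is to deform $f$ by an arbitrarily small perturbation, supported in a fixed large ball, into a map that is $C^\infty$ and Morse near its zero set; since the perturbation leaves $f$ untouched outside that ball, properness of the deformation will come essentially for free. Write $f=\nabla\varphi$ with $\varphi\in C^1(\R^2)$. As $f\in\Prop(\R^2)$, one has $|f(x)|\to\infty$ as $|x|\to\infty$, so $f^{-1}(0)$ is bounded, and after enlarging we fix $R>0$ with $f^{-1}(0)\subseteq B_R:=\{x\in\R^2:|x|<R\}$ and $|f(x)|\ge 2$ for all $|x|\ge R$. Fix once and for all a $C^\infty$ cutoff $\beta\colon\R^2\to[0,1]$ with $\beta\equiv1$ on $\overline{B_R}$ and $\operatorname{supp}\beta\subseteq B_{R+1}$.

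First I would produce the target map. Mollify $\varphi$ to $\varphi_\varepsilon:=\varphi\ast\rho_\varepsilon$; since $\nabla\varphi_\varepsilon=(\nabla\varphi)\ast\rho_\varepsilon$ and $\nabla\varphi$ is continuous, $\varphi_\varepsilon\to\varphi$ in $C^1$ on $\overline{B_{R+1}}$ as $\varepsilon\to0$. The map $\nabla\varphi_\varepsilon\colon\R^2\to\R^2$ is $C^\infty$, so by Sard's theorem almost every $a\in\R^2$ is a regular value of it; choose such an $a$ with $|a|$ small and set
\[
\psi:=\varphi+\beta\cdot\bigl(\varphi_\varepsilon-\varphi-\langle a,\,\cdot\,\rangle\bigr),\qquad g:=\nabla\psi .
\]
Then $\psi\in C^1(\R^2)$, $\psi=\varphi$ off $B_{R+1}$, and $\psi=\varphi_\varepsilon-\langle a,\,\cdot\,\rangle$ on $B_R$, so $g=f$ off $B_{R+1}$ and $g=\nabla\varphi_\varepsilon-a$ on $B_R$. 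The difference $g-f$ is supported in $\operatorname{supp}\beta$, where it is bounded by a quantity that tends to $0$ with $\varepsilon$ plus a fixed multiple of $|a|$; choosing first $\varepsilon$, then $|a|$, small enough gives $\sup_{\R^2}|g-f|<1$. Since $|f|\ge 2$ on $\{|x|\ge R\}$, this forces $g^{-1}(0)\subseteq B_R$, and on $B_R$ the zeros of $g$ are exactly the solutions of $\nabla\varphi_\varepsilon=a$ — a finite set (closed, bounded and discrete, because $a$ is a regular value) at each point of which $\Hess\psi=\Hess\varphi_\varepsilon$ is invertible. Hence $g$ is a generic map, and $g\in\Prop(\R^2)$ because it agrees with $f$ outside $B_{R+1}$.

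Finally I would exhibit the homotopy $h(t,x):=\nabla_x\bigl[\varphi(x)+t\,\beta(x)\bigl(\varphi_\varepsilon(x)-\varphi(x)-\langle a,x\rangle\bigr)\bigr]$. By construction each slice $h_t$ is a gradient, $h_0=f$ and $h_1=g$. Since $h_t\equiv f$ off $B_{R+1}$, for every $r>0$ the set $h^{-1}(\overline{B_r})$ lies inside the compact set $(I\times\overline{B_{R+1}})\cup(I\times f^{-1}(\overline{B_r}))$ and, being closed, is itself compact; thus $h$ is proper, and $h$ is the required proper gradient homotopy. The only genuine subtlety is that $\varphi$ is merely $C^1$, so neither Sard's theorem nor the notion of a nondegenerate critical point applies to it directly — this is precisely what forces the preliminary mollification — and one must keep both the mollification and the linear Morse perturbation small compared with the bound $|f|\ge 2$ on $\{|x|\ge R\}$, so that no zero of any $h_t$ can escape from $B_R$. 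Both points are handled by localising everything through $\beta$, by the $C^1$-convergence $\varphi_\varepsilon\to\varphi$, and by the freedom in the size of $|a|$; the remaining verifications are routine.
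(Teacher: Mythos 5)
The paper gives no proof of this proposition at all: it is simply attributed to Hirsch, Ch.~6. So your argument stands on its own. Its skeleton --- mollify the potential, subtract a small linear term $\langle a,\cdot\rangle$ with $a$ a Sard-regular value of $\nabla\varphi_\varepsilon$, glue to $\varphi$ by a cutoff supported where $|f|\ge 2$, and connect by the straight-line homotopy of potentials --- is the standard one, and your properness bookkeeping is correct: since $h_t=f$ outside $B_{R+1}$ and $|h_t-f|<1$ everywhere while $|f|\ge 2$ on $\{|x|\ge R\}$, no zero of any $h_t$ escapes $B_R$, and $h^{-1}(K)$ is a closed subset of a compact set.

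There is, however, a genuine gap in what the construction delivers. In this paper ``generic'' must mean more than ``finitely many nondegenerate zeros'': the arguments of Sections~\ref{sec:proof2} and~\ref{sec:proof4} work with \emph{the} local flow $\eta$ generated by the generic map on all of $\R^2$ (trajectories escaping to infinity, the level curves $S(\alpha)$ for arbitrarily large $\alpha$ in the proof of Lemma~\ref{lem:open}, the stable and unstable manifolds in Lemma~\ref{lem:cancel}), and for a well-defined flow the vector field must be at least locally Lipschitz everywhere, not merely continuous. Your $g$ coincides with the original, merely continuous, $f$ outside $B_{R+1}$, because the mollification is switched off by $\beta$; so $g$ need not generate a flow and is not generic in the sense the rest of the paper needs. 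The repair is to smooth globally rather than locally: use the fine (relative) $C^1$-approximation theorem --- which is exactly what the cited chapters of Hirsch supply --- to produce a smooth $\varphi_\varepsilon$ with $|\nabla\varphi_\varepsilon(x)-\nabla\varphi(x)|<\min\bigl(1,\tfrac12|f(x)|\bigr)$ for \emph{all} $x$; the straight-line homotopy is then proper by the same estimate, and only afterwards does one add the Sard perturbation $-\langle a,\cdot\rangle$ cut off near the (compact) zero set. A secondary point: if ``generic'' is also meant to include the Morse--Smale condition (no saddle--saddle connections, cf.\ the transversality hypothesis of Lemma~\ref{lem:cancel}) or the local normal form $f=\pm\id$ near sources and sinks invoked in the proof of Lemma~\ref{lem:open}, a further standard perturbation is required which your argument does not address.
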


\subsection{Notation}
Let us denote by $B_r(p)$ ($D_r(p)$) the open (closed)
$r$-ball in $\R^n$ around~$p$.

\section{Main results} 
\label{sec:main}

Let us formulate the main results of our paper.
 
\begin{thm}\label{thm:one}
$\Vsq{k}{n}$ is a singleton for each $k\in\Z$ and $n\ge2$.
\end{thm}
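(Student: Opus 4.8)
The plan is to first establish that $\Vsq{k}{n}$ is nonempty by exhibiting a standard model map of degree $k$, and then show that every $f\in\Vk$ is gradient homotopic to this model. For the model, recall that for $n=2$ one may use (the gradient of) a harmonic-type polynomial realizing $z\mapsto z^k$ for $k\ge 0$ and its conjugate for $k<0$, suitably cut off so that $f^{-1}(0)$ is compact; for general $n$, one takes a map built from such a planar model in the first two coordinates times $-\id$ (or $\id$) on the remaining coordinates, adjusting the sign of the extra block to correct the degree. So the real content is the uniqueness statement.

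For uniqueness, I would first reduce to a compactly-supported / disc situation. Given $f\in\Vk$, since $f^{-1}(0)$ is compact we may choose $R>0$ with $f^{-1}(0)\subset B_R(0)$; after a gradient homotopy (rescaling the potential outside the ball, or composing with a radial push) we may assume $f$ is nonzero on $\R^n\setminus B_R(0)$ and, crucially, that $f$ ``points radially outward'' near and beyond the sphere $\partial B_R(0)$ — concretely that $f$ agrees with $x\mapsto x$ outside $B_{2R}(0)$. This is where I would invoke Parusiński's theorem in spirit: on the closed ball $D_R(0)$ with $f$ nonvanishing on the boundary, $f$ is the gradient of some $\varphi$, and two such gradient vector fields on the disc with the same boundary behavior (same degree) are gradient homotopic rel the relevant data. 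The point of the excerpt's setup — Lemma 1.1 on path-connectedness of invertible symmetric matrices by signature, Corollary 1.2 on Hessian maps, and Proposition 1.4 reducing to generic maps — is exactly to make this work by hand: homotope $f$ to a generic gradient map whose zero set is a finite collection of nondegenerate critical points, each locally looking like its Hessian, i.e. like $\pm\id$ or a saddle model.

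The main combinatorial step — and the step I expect to be the chief obstacle — is then a cancellation argument: using gradient homotopies one slides the finitely many nondegenerate critical points together and cancels them in pairs of consecutive Morse index (a source with a saddle, a saddle with a sink, etc.), exactly as in the proof that Morse functions can be simplified, until only the critical points forced by the degree $k$ remain, arranged in the standard model configuration. The subtlety is that all intermediate homotopies must stay \emph{gradient} — one is deforming potentials, not just vector fields — so each elementary cancellation must be realized at the level of the function $\varphi$ (a handle-cancellation / Whitney-trick type move in dimension $n$), and one must check the zero set stays compact throughout (which is automatic since we have pinned down $f$ outside $B_{2R}(0)$). Once the configuration of critical points matches the model's, a final linear-algebra homotopy using Lemma 1.1 and Corollary 1.2 matches each local model to the model map's, completing the proof that any two elements of $\Vk$ are gradient homotopic, hence $\Vsq{k}{n}$ is a singleton.
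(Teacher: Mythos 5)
There is a genuine gap, and in fact two distinct problems. First, your normalization step fails for $k\neq1$: you propose to homotope $f$ so that it ``agrees with $x\mapsto x$ outside $B_{2R}(0)$,'' but the Brouwer degree of $f\in\Vk$ is computed from the restriction of $f$ to a large sphere and is preserved by gradient homotopies with compact zero set, so a map equal to the identity outside a ball necessarily has degree $1$. For general $k$ you would have to normalize the behaviour on the annulus to some standard degree-$k$ model, and producing that normalization \emph{through gradient fields nonvanishing on the annulus} is essentially the statement you are trying to prove — the reduction is circular. Second, the heart of your argument (homotope to a generic gradient map, then cancel critical points in pairs of consecutive index by gradient homotopies until a standard configuration remains) is precisely the content of Parusi\'nski's theorem, which the paper simply cites as a black box (\cite[Thm 1]{P}: two gradient fields on $D_r(0)$, nonvanishing on $\partial D_r(0)$ and of the same degree, are joined by a gradient homotopy nonvanishing on the boundary). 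You alternate between invoking this result ``in spirit'' and re-deriving it by hand; the hand derivation is exactly the step you flag as ``the chief obstacle'' and is never carried out. Note also that the cancellation machinery you point to (Lemma~\ref{lem:symm}, Corollary~\ref{cor:symm}, Proposition~\ref{prop:generic}, Lemma~\ref{lem:cancel}) is developed in the paper only for \emph{proper} gradient fields in $\R^2$ and is used for Theorems~\ref{thm:twoone}--\ref{thm:twothree}, not for Theorem~\ref{thm:one}; in dimension $n>2$, and without properness, none of it is available as stated.

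What the paper actually does for Theorem~\ref{thm:one} is much shorter and sidesteps both issues: choose $r$ with $(\nabla\varphi)^{-1}(0)\cup(\nabla\psi)^{-1}(0)\subset B_r(0)$, apply Parusi\'nski's theorem on $D_r(0)$ to get a gradient homotopy $\zeta$ there, and then transfer everything to $\R^n$ by precomposing with a diffeotopy $\theta$ that compresses $\R^n$ onto $B_r(0)$ (Lemma~\ref{lem:proof1a}). The only thing that has to be checked is that the three resulting homotopies $h^1,h^2,h^3$ have compact zero sets, which is Lemmas~\ref{lem:proof1b} and~\ref{lem:proof1c}. That compression trick is the genuinely new content of the proof, and it is exactly the piece your proposal replaces with the untenable ``equal to $\id$ outside $B_{2R}(0)$'' normalization. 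To repair your write-up you should drop the Morse-cancellation programme, cite \cite[Thm 1]{P} outright, and supply an argument for extending the disc homotopy to a gradient homotopy on all of $\R^n$ with compact zero set.
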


\begin{thm}\label{thm:twoone}
$\Psq{k}{2}$ is empty for $k>1$.
\end{thm}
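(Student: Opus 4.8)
The plan is to show that a proper gradient vector field on $\R^2$ whose zero set may be assumed finite (after reducing to a generic map by Proposition~\ref{prop:generic}) cannot have Brouwer degree exceeding $1$. First I would invoke Proposition~\ref{prop:generic} to replace any $f\in\Pk$ by a proper gradient homotopic generic map $g$, so that $g=\nabla\varphi$ for some $C^2$ function $\varphi$ with only nondegenerate critical points, finitely many of them since $g$ is proper and $g^{-1}(0)$ is therefore compact and discrete. At each such critical point $p$ the local index of $g$ equals $\sign\det\Hess_p\varphi=(-1)^{\Ind_p\varphi}$, which is $+1$ at local maxima and minima (Morse index $0$ or $2$) and $-1$ at saddles (Morse index $1$). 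Hence $\deg g=\#\{\text{extrema}\}-\#\{\text{saddles}\}=m_0+m_2-m_1$ in the obvious notation.

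The key step is then a Morse-theoretic constraint forcing $m_0+m_2-m_1\le 1$. Because $g$ is proper and gradient, the flow $\eta_g$ of $-g$ (or $g$) has all of $\R^2$ as an isolating neighbourhood in a suitable sense: large discs $D_r(0)$ are isolating neighbourhoods of the full invariant set, and the Conley index of that invariant set in $D_r(0)$ is that of a large ball in $S^2$, i.e.\ $H_*(S^2,\mathrm{pt})$, exactly as in the argument of Starostka recalled in the Introduction. The Morse inequalities for the finitely many nondegenerate rest points inside $D_r(0)$ relative to this global index then read $m_i\ge b_i$ and $\sum(-1)^i m_i=\sum(-1)^i b_i=\chi(S^2)=2$ if we count with the sink-type normalization, but the relevant orientation here makes the Euler-characteristic count equal to $m_0-m_1+m_2$, and the strong Morse inequalities applied to the Poincaré polynomial of $S^2$ (namely $1+t^2$) give $m_1\ge m_0-1$ and $m_1\ge m_2-1$, hence $m_0+m_2-m_1\le 1$. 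Equivalently, one can argue directly: the unstable manifolds of the extrema are open $2$-cells and, together with the lower-dimensional cells, give a CW structure on (a space homotopy equivalent to) $S^2$ after one-point compactification, and a CW complex homotopy equivalent to $S^2$ has at most one more top cell than it has $1$-cells.

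I expect the main obstacle to be making the "global Conley index / compactification" step rigorous for a merely $C^2$, not necessarily structurally stable, proper gradient field: one must check that properness of $g$ guarantees that the one-point compactification of the flow on $\R^2$ extends continuously to $S^2$ with $\infty$ a repelling (or attracting) fixed point, so that the Conley index computation and the Morse inequalities genuinely apply. This is presumably where the technical results in Appendix~\ref{sec:appA} enter, to control the behaviour of trajectories near infinity for proper gradient vector fields. Once that is in place, the inequality $\deg g=m_0+m_2-m_1\le 1$ is immediate, and since $\deg$ is a proper homotopy invariant this shows $\Pk=\varnothing$ for $k>1$, hence $\Psq{k}{2}=\varnothing$, which is the assertion of Theorem~\ref{thm:twoone}.
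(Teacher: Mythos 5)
Your high-level strategy (reduce to a generic map, write $\deg g=m_0+m_2-m_1$, then bound this by $1$ using index theory at infinity) is a genuinely different route from the paper's, and a version of it could in principle work, but as written the key step contains real errors. First, the claim that the Conley index of the maximal invariant set in a large disc $D_r(0)$ is $H_*(S^2,\mathrm{pt})$ is false: for $f=-\id$ it is $H_*(S^0,\mathrm{pt})$, and indeed the fact that this index is \emph{not} determined a priori is exactly the content of Starostka's theorem recalled in the Introduction. Likewise, $\infty$ need not become an attracting or repelling rest point of the compactified flow (take $\varphi(x,y)=x^2-y^2$: $\nabla\varphi$ is proper, and the zero at $\infty$ of the compactified field has index $3$), so neither the Betti numbers $(1,0,1)$ of $S^2$ nor $(0,0,1)$ of the reduced homology can be fed into the Morse inequalities. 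Finally, even granting your two inequalities, the deduction ``$m_1\ge m_0-1$ and $m_1\ge m_2-1$, hence $m_0+m_2-m_1\le 1$'' is a non sequitur: $m_0=m_2=2$, $m_1=1$ satisfies both hypotheses but gives $m_0+m_2-m_1=3$. The salvageable version of your idea is the classical fact that for a planar isolating block $N$ (a disc) with exit set $L\subset\partial N$ one always has $\deg(\nabla\varphi,N,0)=\chi\bigl(H_*(N,L)\bigr)\le 1$, since $L$ is a compact subset of a circle; but that computation, not the $S^2$ Morse inequalities, is the missing ingredient, and it would need to be carried out.

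For comparison, the paper avoids compactification entirely. It first uses Proposition~\ref{prop:cancel} to cancel every saddle against a source or sink by a proper gradient homotopy, reducing to a generic $f$ with $B_f=\emptyset$. Then, for a source $x$, the circle $S^1$ of outgoing directions is covered by the disjoint open sets $V_x^y$ with $y$ ranging over sinks and $\infty$ (Lemma~\ref{lem:open}(1)); connectedness of $S^1$ forces $V_x^y=S^1$ for a single $y$, and Lemma~\ref{lem:open}(2) then forces $y=\infty$ and $A_f=\{x\}$, so $\deg f=1$, contradicting $k>1$. This is more elementary than your route and sidesteps all the delicate behaviour at infinity, at the cost of the cancellation machinery already developed for Theorem~\ref{thm:twothree}.
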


\begin{thm}\label{thm:twotwo}
$\Psq{0}{2}$ is a singleton.
\end{thm}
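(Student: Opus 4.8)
The plan is to show that any $f \in \mathcal{P}_0^\nabla[\R^2]$ is proper gradient homotopic to a fixed model map of degree $0$, for instance the constant map $x \mapsto (1,0)$ (which is trivially gradient, being $\nabla\varphi$ for $\varphi(x,y)=x$, and trivially proper since it has empty zero set). By Proposition~\ref{prop:generic} I may first replace $f$ by a generic map, so I assume $f^{-1}(0)$ is a finite set of nondegenerate zeros $p_1,\dots,p_m$, each carrying a local index $\pm 1$ according to the sign of the Hessian determinant; since $\deg f = 0$, the signed count of these is zero. The strategy has two stages: first, localize and separate the zeros into small disjoint discs, replacing $f$ near each $p_i$ by its Hessian map (this is legitimate up to proper gradient homotopy by the standard local straightening, and Corollary~\ref{cor:symm} lets me further normalize each local piece to $\pm\id$ on its disc); second, cancel the zeros in pairs of opposite index, eventually removing all of them and landing on a nonvanishing proper gradient field, which one then argues is proper gradient homotopic to the constant.

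The key cancellation step is the following: a source ($+\id$) and a sink ($+\id$ as well — wait, both sources and sinks have $\det\Hess>0$, hence index $+1$; the index $-1$ zeros are the saddles with $\sign\Hess = 1$). So the zeros split into index $+1$ points (where after normalization the field looks like $\pm\id$, i.e. source or sink) and index $-1$ points (saddles). I need a lemma saying that a source–saddle pair or a sink–saddle pair confined to a disc, with the field pointing appropriately on the boundary circle, is proper gradient homotopic to a nonvanishing field on that disc — this is a planar gradient analogue of the classical Whitney/Morse cancellation of a pair of critical points of adjacent index connected by a single gradient trajectory. Concretely I would write down an explicit potential on the disc (a "birth–death" family $\varphi_s(x,y) = \tfrac13 x^3 - s x + \tfrac12 y^2$ type deformation, suitably cut off to be constant near the boundary) realizing the cancellation, check that the gradient of the cutoff interpolation stays nonzero on an annular collar for all $s$ so that the homotopy is proper, and invoke the Appendix's technical gluing results to patch this local homotopy into a global proper gradient homotopy on $\R^2$. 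Iterating, I reduce to the case $f^{-1}(0) = \emptyset$.

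For the final reduction — a nonvanishing proper gradient field on $\R^2$ is proper gradient homotopic to the constant $(1,0)$ — I would argue as follows. Properness plus nonvanishing forces $|f(x)| \to \infty$ as $|x|\to\infty$ (otherwise some compact set has noncompact preimage). Writing $f = \nabla\varphi$, nonvanishing means $\varphi$ has no critical points, so its level sets are smooth lines foliating the plane; since $\deg f = 0$ the field is not "spiraling to infinity" in a way that obstructs this, and one can construct a diffeomorphism of $\R^2$ straightening the foliation so that $\varphi$ becomes (up to the homotopy) the linear function $x$. Making this precise is where I expect the main obstacle: one must ensure the straightening can be done through proper gradient homotopies rather than merely topologically, which likely means exhibiting an explicit family interpolating the potentials while controlling the gradient's norm near infinity uniformly in the parameter. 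I would handle this by first using a proper homotopy to push all zeros of $f$ (none, now) aside and to arrange $f = (1,0)$ outside a large disc — using the growth condition and a partition-of-unity/cutoff argument on the potential — and then contracting what remains inside the disc radially, the interior interpolation being unobstructed because there are no critical points to create or destroy.

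The hard part, concretely, will be the two gluing issues: (i) verifying that the local Morse-cancellation homotopy, once cut off to be stationary on a boundary collar, still has nonvanishing gradient throughout the collar for every parameter value — i.e. the "death" of the critical pair genuinely happens in the interior — so that the patched global homotopy is proper; and (ii) the final step of absorbing a nonvanishing proper gradient field into the constant, where properness must be preserved along the whole deformation. Both are exactly the kind of quantitative estimates the paper defers to Appendix~\ref{sec:appA}, so I would state the needed gluing and cutoff lemmas there and use them as black boxes in the body of the proof, keeping Section~\ref{sec:proof3} to the structural argument: genericity, localization via Corollary~\ref{cor:symm}, pairwise cancellation, and the endgame to the constant.
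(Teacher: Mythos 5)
Your overall architecture matches the paper's: pass to a generic representative (Proposition~\ref{prop:generic}), cancel source/sink--saddle pairs until no zeros remain (this is exactly Proposition~\ref{prop:cancel}, whose proof via the sets $V_x^y$, Lemma~\ref{lem:height} and the Laudenbach cancellation lemma is the real work, and which your ``birth--death family, suitably cut off'' sketch compresses considerably), and then show that any two nonvanishing proper gradient fields are proper gradient homotopic. The first two stages are fine in outline.

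The endgame, however, contains a genuine error. Your proposed model map, the constant $x\mapsto(1,0)$, is \emph{not} proper: the preimage of the compact set $\{(1,0)\}$ is all of $\R^2$. You justify its properness by saying it ``has empty zero set,'' but that is the membership condition for $\V(\R^2)$ (compactness of $f^{-1}(0)$), not for $\Prop(\R^2)$. For the same reason your intermediate step of ``arranging $f=(1,0)$ outside a large disc'' destroys properness: properness of a nonvanishing field forces $\abs{f(x)}\to\infty$ as $\abs{x}\to\infty$ (as you yourself note two sentences earlier), so no proper map can be constant near infinity, and no proper homotopy can end at one. This is precisely the difficulty the paper's Lemma~\ref{lem:nozero} is designed to circumvent: rather than straightening the level-set foliation, it precomposes the potential with the radial stretching $\xi_t(x)=(1+t\abs{x})x$, so that the resulting gradient $\Xi_t(f)(x)=D\xi_t^T(x)f(\xi_t(x))$ has norm bounded below by $(1+\tfrac12\abs{x})c$ with $c=\min\abs{f}>0$; this keeps every stage proper while the inner field $f((2-2t)x)$ is shrunk to the constant $f(0)$, and the correct common model is the \emph{stretched} constant $\Xi_{1/2}(f(0))$, not the constant itself. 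Without some device of this kind your final reduction does not go through.
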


\begin{thm}\label{thm:twothree}
$\Psq{1}{2}$ has at most two elements.
\end{thm}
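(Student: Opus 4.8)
The plan is to show that every $f\in\Pk[1]$ is proper gradient homotopic to one of two specific models, and that this gives an upper bound of two on $|\Psq{1}{2}|$. By Proposition~\ref{prop:generic}, we may assume $f$ is generic, so $f^{-1}(0)$ is a finite set of nondegenerate critical points of a potential $\varphi$, each of signature $0$ (a local minimum or maximum, index $+1$) or $1$ (a saddle, index $-1$). Since $\deg f=1$, the sum of the local indices equals $1$, so the number of saddles is one less than the number of extrema. The first step is a \emph{local cancellation} lemma: whenever two critical points of opposite index are joined by an arc along which $f$ stays close to $0$ in a suitable sense, there is a proper gradient homotopy, supported near that arc, which removes both of them. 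This is exactly the kind of statement the appendix is advertised to supply; I would invoke it to cancel saddle–extremum pairs one at a time. Because the index sum is $1$, this process terminates with a generic map having a single zero, necessarily an extremum — a source or a sink — so $f$ is proper gradient homotopic either to a map whose only zero is a local minimum (signature $0$) or to one whose only zero is a local maximum (signature $2$).

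The second step is to normalize the surviving single-zero map. Using the appendix's rigidity/normal-form results for proper gradient vector fields with one nondegenerate zero, I would show such a map is proper gradient homotopic to its Hessian at that point, hence by Corollary~\ref{cor:symm} to $\id_{\R^2}$ (if the zero has signature $0$) or to $-\id_{\R^2}$ (if signature $2$). Therefore $\Psq{1}{2}$ has at most the two classes $[\id_{\R^2}]$ and $[-\id_{\R^2}]$, proving the theorem. (That these two are genuinely distinct is Starostka's result, recalled in the introduction via the Conley index, but distinctness is not needed for the ``at most two'' claim.)

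The subtle points, and where I expect the real work to lie, are two. First, the cancellation step must be carried out \emph{within the proper category}: the homotopy one writes down to annihilate a pair of zeros must itself be proper as a map $I\times\R^2\to\R^2$, not merely have compact zero set, and it must remain gradient at every time $t$ — combining these two constraints is precisely what makes proper gradient homotopy more rigid than ordinary homotopy, so the construction has to be done with care, presumably by working with potentials and modifying $\varphi$ on a compact region while controlling behaviour at infinity. Second, one must verify that after each cancellation the map is still generic (or can be made so by a further small proper gradient homotopy) so that the induction is legitimate. I would organize the argument so that all the analytic content of these two points is quarantined in the appendix lemmas, leaving the proof of Theorem~\ref{thm:twothree} itself a short bookkeeping argument on the index count.
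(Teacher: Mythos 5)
Your proposal follows essentially the same route as the paper: reduce to a generic map via Proposition~\ref{prop:generic}, cancel saddle--extremum pairs one at a time (this is exactly Proposition~\ref{prop:cancel}, whose proof via Lemmas~\ref{lem:key}--\ref{lem:twoorbits} and Lemma~\ref{lem:cancel} carries the real work of staying in the proper gradient category), and then normalize the surviving single zero to its Hessian and hence to $\pm\id_{\R^2}$ via Corollary~\ref{cor:proof2} and Corollary~\ref{cor:symm} (the paper's Proposition~\ref{prop:point}). You correctly locate where the difficulty lies, and your bookkeeping on the index count matches the paper's $\deg f=\abs{A_f}-\abs{B_f}=1$.
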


Combining Theorem \ref{thm:twothree} with the theorem of M. Starostka 
(see \cite[Main Theorem]{S}) gives immediately
the following result.
\begin{cor}
$\Psq{1}{2}$ has exactly two elements: the class of
the identity and the minus identity.
\end{cor}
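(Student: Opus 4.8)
The plan is to pair the upper bound supplied by Theorem~\ref{thm:twothree} with a matching lower bound coming from Starostka's theorem. First I would check that both $\id_{\R^2}$ and $-\id_{\R^2}$ actually lie in $\mathcal{P}_1^\nabla(\R^2)$. Each is a gradient: $\id_{\R^2}=\nabla\varphi$ with $\varphi(x)=\abs{x}^2/2$, and $-\id_{\R^2}=\nabla\psi$ with $\psi(x)=-\abs{x}^2/2$. Each is proper, since $\abs{\pm x}=\abs{x}\to\infty$ as $\abs{x}\to\infty$. And each has Brouwer degree $1$; the only point worth noting is that $\deg(-\id_{\R^2})=(-1)^2=1$, which is why $-\id_{\R^2}$ lands in $\mathcal{P}_1^\nabla(\R^2)$ rather than in some other class. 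Consequently $[\id_{\R^2}]$ and $[-\id_{\R^2}]$ are elements of $\Psq{1}{2}$, a priori possibly the same one.

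Next I would invoke \cite[Main Theorem]{S}, which states precisely that $\id_{\R^2}$ and $-\id_{\R^2}$ are not proper gradient homotopic; hence $[\id_{\R^2}]\neq[-\id_{\R^2}]$ and $\Psq{1}{2}$ has at least two elements. Together with Theorem~\ref{thm:twothree}, which gives at most two, this forces $\Psq{1}{2}=\{[\id_{\R^2}],\,[-\id_{\R^2}]\}$. There is no genuine obstacle in this argument: all of the weight rests on Theorem~\ref{thm:twothree} (established in Section~\ref{sec:proof4}) and on the quoted theorem of Starostka, while the corollary itself merely bundles these two inputs with the elementary observation that $\pm\id_{\R^2}$ are admissible proper gradient fields of degree one.
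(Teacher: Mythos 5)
Your proof is correct and follows the same route as the paper: the paper likewise obtains the corollary by combining the upper bound of Theorem~\ref{thm:twothree} with Starostka's theorem that $\id_{\R^2}$ and $-\id_{\R^2}$ are not proper gradient homotopic. Your explicit verification that $\pm\id_{\R^2}$ lie in $\Pk$ with $k=1$ is a harmless elaboration of what the paper leaves implicit.
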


We close this section with the following conjecture 
and open problem.

\begin{conj}
$\Psq{k}{2}$ is a singleton for $k<0$.
\end{conj}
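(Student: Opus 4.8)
The plan is to pin down an explicit representative of $\Psq{k}{2}$ and then show that every element of $\Pk$ is proper gradient homotopic to it. Put $m:=-k\ge 1$ and identify $\R^2$ with $\mathbb{C}$; take $g_k:=\nabla\varphi_k$ with $\varphi_k(z):=\operatorname{Re}(z^{m+1})$, so that $g_k(z)=(m+1)\,\bar z^m$. This $g_k$ is proper (since $|\bar z^m|=|z|^m\to\infty$), has $g_k^{-1}(0)=\{0\}$, and satisfies $\deg g_k=-m=k$; hence $g_k\in\Pk$ and $\Psq{k}{2}\ne\emptyset$. The aim is to prove $\Psq{k}{2}=\{[g_k]\}$.

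Let $f\in\Pk$ be arbitrary. By Proposition~\ref{prop:generic} we may assume $f=\nabla\varphi$ is generic. Properness of $f$ forces $|\nabla\varphi(x)|\to\infty$ as $|x|\to\infty$, so for $R$ large all zeros of $f$ lie in $B_R(0)$, $f$ is nonvanishing off $B_R(0)$, and the winding number of $f|_{\partial D_R(0)}$ about $0$ equals $\deg f=k$. The heart of the proof is to normalize $f$ near infinity: carry out a proper gradient homotopy, fixed on some disc, after which $f$ coincides with $g_k$ on $\R^2\setminus D_{R'}(0)$ for some $R'$. That this ought to be possible precisely when $k<0$ reflects the structure of the ``ends'' of coercive gradient fields on the plane. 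On an annular neighbourhood of infinity such a field, being nonvanishing, carries (generically) a cyclic alternating pattern of ``ridges'' (directions along which $\varphi\to+\infty$) and ``valleys'' (directions along which $\varphi\to-\infty$) of some size $p$, and the winding number about $0$ is $1-p=k$. Thus $k>1$ is impossible (Theorem~\ref{thm:twoone}); $k=1$ gives $p=0$, where the pattern degenerates and the radial component of the field keeps a constant sign --- this is exactly where the two inequivalent ``source at infinity'' and ``sink at infinity'' ends responsible for Starostka's obstruction sit (Theorem~\ref{thm:twothree}); $k=0$ gives $p=1$, a single ridge and a single valley, hence a rigid end (Theorem~\ref{thm:twotwo}); and $k=-m<0$ gives $p=m+1\ge 2$. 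In this last range the combinatorial type is again completely determined by $k$, with no residual orientation choice of the kind seen for $k=1$, so --- much as in the proof of Theorem~\ref{thm:twotwo}, only with a longer but equally rigid pattern --- the $2(m+1)$ level arcs of $\varphi$ escaping to infinity can be isotoped to the standard rays of $\varphi_k$ and the potentials interpolated along the way, which yields the desired normalization.

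Once $f$ and $g_k$ agree on $\R^2\setminus D_{R'}(0)$, both restrict to gradient vector fields on $D_{R'}(0)$ that are nonvanishing on, and equal on, $\partial D_{R'}(0)$, and that share the Brouwer degree $k$. By a boundary-relative version of Parusi{\'n}ski's theorem \cite{P} --- of the kind the machinery behind Theorems~\ref{thm:twotwo} and \ref{thm:twothree} should supply --- they are gradient homotopic on $D_{R'}(0)$ rel $\partial D_{R'}(0)$. Splicing this homotopy with the one that is identically $g_k$ on $\R^2\setminus D_{R'}(0)$ produces a homotopy that is gradient in each time slice, agrees with a fixed proper map outside a compact set, and has compact zero set, i.e.\ a proper gradient homotopy from $f$ to $g_k$. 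Since $f$ was arbitrary, $\Psq{k}{2}=\{[g_k]\}$.

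The main obstacle is this normalization near infinity: one must prove that, for $k<0$, every coercive gradient end of winding number $k$ is proper gradient homotopic to the end of $\varphi_k$, and do so compatibly with the bounded part --- which may force one to enlarge $R'$ as the homotopy runs, or to first deform the restriction of the field to a large sphere to a ``standard'' gradient boundary datum so that the splicing in the last step is legitimate. I expect the work to be a concrete construction: put the end in generic form, read off its $m+1$ ridges and $m+1$ valleys, isotope these to the rays $\arg z\in\{\tfrac{\pi}{2(m+1)}+\tfrac{j\pi}{m+1}:0\le j\le 2m+1\}$, and extend the isotopy to an interpolation of potentials that keeps both the gradient and its norm under control. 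By contrast, the minimum/maximum--saddle cancellations one might otherwise expect to have to perform in the bounded part are entirely absorbed into the appeal to \cite{P}.
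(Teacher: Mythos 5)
This statement is not proved in the paper: it is stated explicitly as a \emph{conjecture}, and the introduction lists the case $k<0$ as missing. So there is no argument of the authors to compare yours against; your proposal has to stand on its own, and as written it does not --- it is a plan whose two load-bearing steps are exactly the open difficulties. The choice of representative $g_k=\nabla\operatorname{Re}(z^{m+1})$ and the verification that $\Psq{k}{2}\neq\emptyset$ are fine, but everything after that is asserted rather than proved.

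The first gap is the ``normalization near infinity.'' The ridge/valley picture is a heuristic: properness only gives $\abs{\nabla\varphi(x)}\to\infty$, not that $\varphi$ tends to $\pm\infty$ along well-defined directions, so the existence, finiteness and alternation of the ``$p$ ridges and $p$ valleys,'' the identity $1-p=k$, and above all the claim that for $k<0$ the end carries ``no residual orientation choice'' are all unestablished --- and the last of these \emph{is} the conjecture. The whole point of the paper is that an end can carry a proper-gradient-homotopy invariant invisible to the degree (the Conley index of the maximal invariant set in a large disc, which separates $\id$ from $-\id$ when $k=1$). For $k=-m$ a generic representative can have, say, one source, one sink and $m+2$ saddles; you would need to show its Conley index is nonetheless forced to equal that of $g_k$ (a wedge of $m$ circles), and nothing in your sketch addresses this. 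The second gap is the appeal to a ``boundary-relative version of Parusi\'nski's theorem.'' No such statement appears in \cite{P} or in this paper: Parusi\'nski's theorem produces a gradient homotopy that is merely \emph{nonvanishing} on $\partial D_r(0)$, not fixed there, and the paper's own use of it (proof of Theorem~\ref{thm:one}) requires a diffeotopy crushing $\R^n$ onto a ball, which destroys properness --- that is precisely why $\Vsq{k}{n}$ and $\Psq{k}{2}$ differ. A rel-boundary upgrade, plus the matching of \emph{potentials} (not just gradients) near $\partial D_{R'}(0)$ needed for your spliced map to be globally gradient, would be substantial new theorems, not corollaries of the existing machinery. As it stands the proposal restates the conjecture in the language of ends rather than proving it.
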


\begin{quest}
Give the description of the set $\Psq{k}{n}$
for any $k\in\Z$ and $n>2$.
\end{quest}

\section{Proof of Theorem \texorpdfstring{\ref{thm:one}}{2.1}}
\label{sec:proof1}

A~slight modification of the reasoning presented in the proof of Lemma~4
in~\cite{P} shows that the sets $\Vsq{k}{n}$ are nonempty.  
Now we prove that $\Vsq{k}{n}$ consist of only one element. 
Let $\nabla\varphi,\nabla\psi\in\V_k(\R^n)$.
There is $r>0$ such that 
$(\nabla\varphi)^{-1}(0)\cup(\nabla\psi)^{-1}(0)\Subset B_r(0)$.
By the Parusi\'nski theorem (\cite[Thm 1]{P}), there is a $C^1$ function
$\zeta\colon I\times D_r(0)\to\R$ such that
\begin{itemize}
	\item $\nabla_x\zeta(t,x)\neq0$ for all $t\in I$ and $x\in\partial D_r(0)$,
	\item $\nabla\zeta_0=\nabla\big(\restrictionmap{\varphi}{D_r(0)}\big)$,
	\item $\nabla\zeta_1=\nabla\big(\restrictionmap{\psi}{D_r(0)}\big)$.
\end{itemize}
Assume that $\theta$ is a diffeotopy from Lemma~\ref{lem:proof1a}.
Let us define three homotopies $h^i\colon I\times\R^n\to\R^n$ ($i=1,2,3$)
by the formulas
\begin{align*}
h_t^1(t,x)&=\nabla_x\varphi(\theta(t,x)),\\
h_t^2(t,x)&=\nabla_x\psi(\theta(t,x)),\\
h_t^3(t,x)&=\nabla_x(\zeta(t,\theta_1(x))).
\end{align*}
By Lemma~\ref{lem:proof1b}, $h^1$ and $h^2$ are gradient homotopies
and by Lemma~\ref{lem:proof1c}, $h^3$ is a gradient homotopy.
Thus we obtain the following sequence of the gradient homotopy relations
\[
\nabla\varphi=h_0^1\sim h_1^1=h_0^3\sim
h_1^3=h_1^2\sim h_0^2=\nabla\psi,
\]
which completes the proof.\qed


\section{Proof of Theorem \texorpdfstring{\ref{thm:twothree}}{2.4}} 
\label{sec:proof2}
The following two propositions are crucial for the proof of
Theorem~\ref{thm:twothree}. Assume that $f=\nabla\varphi$ is generic.
\begin{prop}\label{prop:point}
Let $f^{-1}(0)=\{p\}$. If $p$ is a source then $f\sim\id_{\R^2}$.
If $p$ is a sink then $f\sim-\id_{\R^2}$.
\end{prop}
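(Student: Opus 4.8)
The strategy is to reduce the global statement about the generic field $f=\nabla\varphi$ with a single zero $p$ to the purely local statement already recorded in Corollary~\ref{cor:symm}. Since $f$ is generic, $p$ is a nondegenerate critical point of $\varphi$, so $\Hess_p\!\varphi$ is invertible and symmetric; by the classification of rest points of a linear gradient flow in the plane, $p$ is a source precisely when $\sign\Hess_p\!\varphi=2$ and a sink precisely when $\sign\Hess_p\!\varphi=0$ (the saddle case $\sign=0$ with indefinite form is excluded because then the Conley index, or simply the degree, would not match; in fact for a source $\deg f=1$ and the Hessian is negative definite, for a sink $\deg f=1$ and the Hessian is positive definite — here I should be careful about sign conventions, but one of the two definite cases occurs). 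So the first step is: \emph{identify the local model}. Near $p$, after a translation we may assume $p=0$, and the Taylor expansion gives $f(x)=\Hess_0\!\varphi\cdot x+o(|x|)$.

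The second and main step is a \textbf{straight-line (Alexander-type) homotopy that pushes the zero to the origin and linearizes the field near it, while keeping the homotopy proper and gradient at every stage}. Concretely, I would first use a proper gradient homotopy to move $p$ to $0$ (translation is realized by a diffeotopy of $\R^2$ composed with $f$, exactly as in the proof of Theorem~\ref{thm:one}; composing a gradient field with a diffeomorphism that is isotopic to the identity stays gradient — this is the content of the auxiliary lemmas in the Appendix). Then, on a small disc $D_\varepsilon(0)$ on whose boundary $f$ does not vanish, perform the linear homotopy $h_t=\nabla\varphi_t$ where $\varphi_t$ interpolates between $\varphi$ and its quadratic part $x\mapsto\tfrac12\langle \Hess_0\!\varphi\,x,x\rangle$; outside a slightly larger disc leave $f$ unchanged, and interpolate the potentials by a partition of unity so that the homotopy is through gradient maps. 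The point is that on the annulus between the two discs the field stays nonzero throughout (one shrinks $\varepsilon$ so that the quadratic part dominates the remainder), hence $h^{-1}(0)$ stays compact and the homotopy is proper gradient. After this step $f\sim g$, where $g$ agrees with the invertible linear map $A:=\Hess_0\!\varphi$ on $D_\varepsilon(0)$ and has its only zero at $0$.

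The third step is to compare $g$ with the genuinely linear field $x\mapsto Ax$: both are proper gradient with a single zero at $0$, both equal $Ax$ near $0$, so a final proper gradient homotopy (again of the radial cut-and-paste type, nonvanishing on a suitable sphere) connects them. Finally invoke Corollary~\ref{cor:symm}: $x\mapsto Ax$ is proper gradient homotopic to $\id_{\R^2}$ when $\sign A=0$ (i.e.\ $A$ positive or negative definite reached through invertibles of signature $0$ — no, positive definite has signature $2$; I must match conventions so that the definite case giving $\deg=+1$ and a source lands on the right target) and to $-\id_{\R^2}$ when $\sign A=2$. Chaining the three proper gradient homotopies yields $f\sim\id_{\R^2}$ in the source case and $f\sim-\id_{\R^2}$ in the sink case.

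The step I expect to be the genuine obstacle is the second one: \textbf{making the linearization homotopy simultaneously proper and gradient}. Keeping it gradient forces one to interpolate potentials rather than vector fields, and then one must check that the interpolated potential has no critical points on the annulus throughout the deformation — this requires a quantitative estimate comparing $\nabla\varphi$ with $A x$ on $D_\varepsilon\setminus\{0\}$, uniform in the partition-of-unity parameter, and it is exactly here that the technical Appendix lemmas (on composing gradients with diffeomorphisms and on gluing gradient homotopies supported in disjoint regions) will be doing the real work. The bookkeeping that $h^{-1}(0)$ remains compact reduces to that same nonvanishing estimate together with the fact that $f$ is unchanged outside a fixed large disc.
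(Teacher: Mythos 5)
Your overall strategy --- reduce $f$ to its linearization $\Hess_p\!\varphi$ at the unique zero and then invoke Corollary~\ref{cor:symm} --- is exactly the paper's. But the mechanism you propose for the reduction has a genuine gap at the step you treat most casually. Your step 2 (cut-and-paste linearization of the potential on a small disc) produces a field $g$ that agrees with $Ax$ only near the origin, and your step 3 must then connect $g$ to the globally linear field $x\mapsto Ax$ by a \emph{proper} gradient homotopy. You dispose of this with ``a final proper gradient homotopy (again of the radial cut-and-paste type, nonvanishing on a suitable sphere).'' Nonvanishing on a sphere only gives compactness of $h^{-1}(0)$, i.e., a gradient homotopy in the sense of $\Vsq{k}{n}$; it does not give properness of $h$ as a map $I\times\R^2\to\R^2$, which is what membership in the same class of $\Psq{1}{2}$ requires --- and which is the whole point of this paper, since in the non-proper setting the statement is vacuous ($\id\sim-\id$ by Theorem~\ref{thm:one}). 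A straight-line interpolation of potentials between $g$ and the quadratic form does not obviously stay proper for large $\abs{x}$: the combination $(1-t)g(x)+tAx$ can vanish, or remain bounded, along sequences going to infinity, because properness of $f$ gives no control on the direction or growth rate of $g$ at infinity. So step 3 is not a routine gluing; it is where the real difficulty lives, whereas you have identified step 2 (which is comparatively harmless, and in fact unnecessary) as the obstacle.

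The paper resolves this in one stroke with Milnor's trick (Corollary~\ref{cor:proof2}): after translating $p$ to the origin, the homotopy $h(t,x)=f(tx)/t$ for $t\neq0$, $h(0,x)=Df_0(x)=\Hess_p\!\varphi(x)$, connects $f$ directly to the Hessian map with no cut-and-paste at all; it is automatically gradient, with potential $\varphi(tx)/t^2$. The entire technical content is then Lemma~\ref{lem:proof2}(2) of the Appendix: the verification, via the estimates $\abs{Df_0(x)}\ge\epsilon\abs{x}$ everywhere and $\abs{f(x)-Df_0(x)}\le\tfrac{\epsilon}{2}\abs{x}$ near $0$, combined with properness of $f$ away from $0$, that this rescaling homotopy is proper on all of $I\times\R^2$. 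If you replace your steps 2 and 3 by this single homotopy and supply that properness estimate (and fix the source/sink versus definiteness bookkeeping you flagged), your argument closes; as written, the globalization step is missing.
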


\begin{proof}
Assume that $p$ is a source. By Corollary~\ref{cor:proof2}, 
$f$ is proper gradient homotopic to the Hessian map 
$\Hess_p\!\varphi\colon\R^2\to\R^2$ and by Corollary~\ref{cor:symm}, 
$\Hess_p\!\varphi$ is proper gradient homotopic to $\id_{\R^2}$.
The same reasoning applies to the case of a sink.
\end{proof}

Let $A_f^-$ ($A_f^+$) denote the set of sources (sinks)
of $f$, $A_f=A_f^-\cup A_f^+$ and $B_f$ the set of saddles.
\begin{prop}\label{prop:cancel}
If $A_f$ and $B_f$ are nonempty then there is a generic map $f'$ such that 
$f\sim f'$, $\abs{A_{f'}}<\abs{A_{f}}$ and  $\abs{B_{f'}}<\abs{B_{f}}$.
\end{prop}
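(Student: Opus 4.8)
The plan is to reduce the statement to a single local surgery modelled on the classical cancellation of a pair of nondegenerate critical points of adjacent Morse index. Since $f=\nabla\varphi$ is generic its zeros are isolated and nondegenerate, and since $f$ is proper $f^{-1}(0)$ is compact, so $f^{-1}(0)$ is finite and $A_f,B_f$ are finite sets. It therefore suffices to exhibit a single \emph{cancelling pair}: an extremum $p\in A_f$, a saddle $q\in B_f$, and a closed disc $D\subset\R^2$ with $p,q\in\Int D$ and $f^{-1}(0)\cap D=\{p,q\}$, such that inside $D$ there is exactly one orbit of the local flow $\eta_f$ joining $p$ and $q$. Granting such a $D$, the normal-form results of the Appendix identify $\restrictionmap{\varphi}{D}$, up to an additive constant and a gradient homotopy that can be arranged to equal $f$ on $I\times(\R^2\setminus D)$, with the model $\psi(x,y)=x^3-3x+y^2$ on $D=D_2(0)$, whose only critical points are the minimum $(1,0)$ and the saddle $(-1,0)$. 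One then runs the deformation $\psi_\lambda(x,y)=x^3-3\lambda x+y^2$ from $\lambda=1$ to some $\lambda_0<0$, for which $\psi_{\lambda_0}$ has no critical point; since $\nabla\psi_\lambda$ vanishes exactly at $(\pm\sqrt\lambda,0)$ for $\lambda\ge 0$ and nowhere for $\lambda<0$, it never vanishes on $\partial D_2(0)$, so the deformation can be cut off inside $D$ and leaves $\varphi$ unchanged outside $D$. The resulting $f'=\nabla\varphi'$ agrees with $f$ off $D$, loses exactly one extremum and one saddle, and the concatenated homotopy $h\colon I\times\R^2\to\R^2$ is a gradient homotopy which is moreover proper: it equals the proper map $f$ on $I\times(\R^2\setminus D)$, so for compact $K$ the set $h^{-1}(K)$ is a closed subset of the union of the compact set $I\times D$ and $I\times f^{-1}(K)$, hence compact. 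The case in which the extremum of the cancelling pair is a sink rather than a source is symmetric (replace $\psi$ by $-\psi$). This gives $f\sim f'$ with $\abs{A_{f'}}<\abs{A_f}$ and $\abs{B_{f'}}<\abs{B_f}$, as required.

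The heart of the argument is the production of a cancelling pair, and for this I would analyse the gradient local flow $\eta_f$. Fix a saddle $q\in B_f$ and consider its four separatrices, i.e.\ the two branches of $W^u(q)$ and the two branches of $W^s(q)$. Because $\eta_f$ is gradient-like it has no nontrivial recurrence, so each separatrix either converges to a point of $f^{-1}(0)$ or escapes to infinity; and since $f$ is generic there are no saddle-to-saddle connections, so a separatrix that converges to a critical point converges to an extremum. I would then show that, possibly after a preliminary proper gradient homotopy furnished by the Appendix that changes neither $A_f$ nor $B_f$ but rearranges the critical values and, if necessary, relocates the critical points, one may assume that some saddle $q$ has a separatrix limiting onto an extremum $p$, and that $q$ may be taken \emph{innermost}, so that the region cut out between $p$ and $q$ contains no further point of $f^{-1}(0)$ and the orbit joining them is unique. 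The normal form for a clean extremum--saddle pair then supplies the disc $D$ and the identification with $\psi$ used above.

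The step I expect to be the main obstacle is exactly this last one: ruling out the degenerate possibility that every separatrix of every saddle escapes to infinity, and, once some separatrix does limit onto an extremum, arranging that the connection is \emph{clean}, that is, realised by a unique joining orbit with no other critical point in the intervening region. Escaping separatrices are controlled by a basin argument: since $A_f$ and $B_f$ are nonempty and finite, the finitely many unstable manifolds of the sources and stable manifolds of the sinks are open invariant sets whose boundaries consist of critical points together with separatrices, and these basins cannot exhaust $\R^2$ (otherwise there would be no saddle); tracing such a boundary (and invoking the preliminary homotopy when the flow lines are uncooperative) one reaches a saddle with a separatrix running out of a source or into a sink, and minimality of the intervening region yields uniqueness of the joining orbit. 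Making all of this precise --- in particular the existence of the preliminary homotopy, the normal form for a clean extremum--saddle pair, and the compactness and properness bookkeeping for the cut-off deformation near $\partial D$ --- is the role of the technical lemmas of the Appendix, which I would invoke rather than reprove here.
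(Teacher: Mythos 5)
Your overall strategy --- locate a source--saddle (or sink--saddle) connected pair and cancel it by a proper gradient homotopy supported away from the other zeros --- is the same as the paper's, but two essential difficulties are passed over, and at least one of them cannot be fixed by the devices you invoke.

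First, the cancellation is not a local surgery in a disc $D$ containing only $p$, $q$ and ``the'' connecting orbit. To cancel a source $p$ against a saddle $q$ one must modify $\varphi$ on a neighbourhood of the closure of $W^u(p)\cap\{\varphi\le\varphi(q)+\epsilon\}$, and for a source in the plane $W^u(p)$ is the entire two--dimensional basin: every direction out of $p$ contributes an orbit, and the hypothesis of the cancellation lemma (Lemma~\ref{lem:cancel}) requires that every such orbit other than the connecting one rise above the level $\varphi(q)+\epsilon$. If some other orbit out of $p$ terminates at a sink whose critical value is below $\varphi(q)$, that region contains a third critical point and the model identification with $x^3-3\lambda x+y^2$ on a disc simply does not exist rel boundary. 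This is a global condition on critical values, not something a ``clean pair in a small disc'' gives you; the paper secures it by proving (Lemma~\ref{lem:height}) that among all sinks and saddles reachable from $p$ the minimum of $\varphi$ is attained at a saddle, and then perturbing (Lemma~\ref{lem:bump}) so that this saddle is the unique minimizer. Your ``preliminary homotopy rearranging critical values'' cannot substitute for this: critical values along connecting orbits are ordered by the flow and cannot be permuted freely.

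Second, the case in which \emph{both} stable separatrices of the saddle $q$ emanate from the same source $p$ (so that $p$ and $q$ are joined by two orbits bounding a disc) is a genuine configuration that your ``innermost'' choice does not eliminate --- the innermost saddle reachable from $p$ may perfectly well be doubly connected to it, and then the unique-orbit hypothesis of the cancellation model fails outright. The paper must treat this separately by an explicit surgery replacing $f$ on the region bounded by the two trajectories with a nonvanishing gradient field (Lemma~\ref{lem:twoorbits}, resting on Lemmas~\ref{lem:square}--\ref{lem:field}). Finally, your existence argument for a saddle--extremum connection (``the basins cannot exhaust $\R^2$, otherwise there would be no saddle'') is asserted rather than proved; the paper's corresponding step (Lemmas~\ref{lem:open} and~\ref{lem:key}) requires a real argument --- openness of the sets of directions flowing to sinks or to infinity, plus the fact that if all directions out of $p$ share one limit then $p$ is the only zero --- and the latter is established by a winding-number/embedded-sphere argument, not by counting basins. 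As written, the proposal's main claims are exactly the points where the proof has to do work.
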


The proof of Proposition \ref{prop:cancel} 
will be preceded by a series of lemmas. 
Let us start with the following notation.
Assume that $x\in A_f^-$ and $y\in A_f^+\cup B_f\cup\{\infty\}$. Set
\begin{align*}
E_x^-&=\{z\in\R^2\mid\omega^-(z)=x\},\\
E_y^+&=\{z\in\R^2\mid\omega^+(z)=y\}.
\end{align*}
From now on, $\eta(t,z)=\eta^t(z)$ denotes the local flow generated by $f$.
Observe that for $z\in E_x^-$ the vector
\[
v_z=\lim_{t\to-\infty}\frac{f(\eta^t(z))}{\abs{f(\eta^t(z))}}
\]
is well-defined. Finally, let us denote by $V_x^y$ the set
$\{v_z\mid z\in E_x^-\cap E_y^+\}$.

The following lemma describes properties of sets $V_x^y$.
\begin{lem}\label{lem:open}
Assume that $y\in A_f^+\cup\{\infty\}$. Then
\begin{enumerate}
	\item $V_x^y$ is an open subset of $S^1$,
	\item if $V_x^y=S^1$ then $x\in A_f^-$ is the only 
	stationary point of $f$ and $y=\infty$.
\end{enumerate}
\end{lem}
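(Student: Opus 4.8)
The statement to prove is Lemma~\ref{lem:open}: for $x \in A_f^-$ a source and $y \in A_f^+ \cup \{\infty\}$, the set $V_x^y \subseteq S^1$ of asymptotic directions (as $t \to -\infty$) of trajectories flowing from $x$ to $y$ is open, and if $V_x^y = S^1$ then $x$ is the only stationary point and $y = \infty$. I would prove openness first via the local structure near the source, then handle the dichotomy in part (2) by a connectedness/compactness argument on the whole flow.

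\medskip

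\textbf{Step 1: Openness of $V_x^y$.} Since $x$ is a source, it is a nondegenerate critical point of $\varphi$ with $\sign \Hess_x\varphi = 0$, so near $x$ the flow $\eta$ looks (after smooth conjugation, or just by hyperbolicity of the linearization and the Hartman--Grobman picture) like a radial blow-up: every trajectory $z \ne x$ near $x$ leaves $x$ with a well-defined limiting direction $v_z \in S^1$, and the map $z \mapsto v_z$ restricted to a small circle $\partial B_\varepsilon(x)$ is a homeomorphism onto $S^1$. Concretely, I would fix a small $\varepsilon$ so that $D_\varepsilon(x)$ is an isolating neighbourhood containing no other stationary point, parametrise trajectories leaving $x$ by their intersection point with $\partial B_\varepsilon(x)$, and note that $v_z$ depends only on this intersection point and varies continuously (indeed homeomorphically) with it. Now $E_x^- \cap E_y^+$ is an open subset of $E_x^-$: the unstable manifold $E_x^-$ is open in $\R^2$ (it equals $\R^2$ minus the stable sets of the other stationary points, which by genericity are lower-dimensional or $\infty$), and within it the condition $\omega^+(z) = y$ is open when $y$ is an attractor or $\infty$ (continuous dependence of the flow plus the fact that $y$ — or a neighbourhood of $\infty$, i.e. the complement of a large disc — is a local attractor, so its basin is open). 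Pushing such a point backward to $\partial B_\varepsilon(x)$ and using the homeomorphism $z \mapsto v_z$ there, the image $V_x^y$ of the open set $(E_x^- \cap E_y^+) \cap \partial B_\varepsilon(x)$ is open in $S^1$.

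\medskip

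\textbf{Step 2: The dichotomy when $V_x^y = S^1$.} Suppose $V_x^y = S^1$. Then \emph{every} trajectory emanating from $x$ (equivalently, every point of $\partial B_\varepsilon(x)$) has $\omega^+(z) = y$, so $E_x^- \setminus \{x\} \subseteq E_y^+$, and in particular $E_x^-$ is forward-invariant. I claim $E_x^- = \R^2$: since $f$ is a gradient vector field, $\varphi$ is strictly increasing along non-constant trajectories, so there are no periodic orbits and every trajectory has $\alpha$- and $\omega$-limit sets consisting of stationary points; the set $E_x^-$ is open, and its boundary (if nonempty) would consist of trajectories whose $\alpha$-limit is \emph{not} $x$ — but such a trajectory's $\alpha$-limit would be another stationary point on $\partial E_x^-$, and one checks using $\varphi$ monotone and the generic (Morse) structure that this forces the existence of a stationary point $x' \ne x$ with $\varphi(x') < \varphi(x)$ lying on $\partial E_x^-$, whose own unstable set would meet the interior of $E_x^-$; tracking $v$-directions, this contradicts $V_x^y = S^1$ because directions near such a boundary trajectory would point into $E_{x'}^-$ rather than landing at $y$. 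Hence $\partial E_x^- = \emptyset$, so $E_x^- = \R^2$, which means $x$ is a global repeller and therefore the only stationary point (any other stationary point $p$ would have $p \in E_x^-$, impossible since $\alpha$-limit of the constant trajectory at $p$ is $p \ne x$). Finally, if $x$ is the only stationary point, then no sink exists, so $y$ cannot lie in $A_f^+$; hence $y = \infty$.

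\medskip

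\textbf{Anticipated main obstacle.} The delicate point is Step~2 — ruling out $\partial E_x^- \ne \emptyset$, i.e. showing that if the source's unstable set is not all of $\R^2$ then some asymptotic direction fails to reach $y$. This requires a careful argument about how the boundary of an open invariant set in a planar gradient flow is organised: one must identify a "lowest" stationary point on $\partial E_x^-$ and argue that trajectories spiralling out of $x$ near the relevant separatrix are captured by a different part of the flow, which contradicts $V_x^y = S^1$. I expect to lean on the Morse/genericity hypothesis (finitely many hyperbolic stationary points, transversal stable/unstable manifolds) and on properness (so that $\infty$ behaves like a single attractor with open basin) to make the boundary-structure analysis clean, perhaps invoking the technical lemmas in Appendix~\ref{sec:appA} for the precise local flow picture near a nondegenerate source.
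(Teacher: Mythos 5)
Your Step 1 is essentially the paper's argument: identify $V_x^y$ with a subset of the circle of directions at the source (the boundary of a small disc $U^-$ on which $f$ is linear), and use continuous dependence on initial conditions together with the fact that a sink --- respectively, by properness, the complement of a large disc $D_\rho(0)$ --- absorbs all nearby trajectories once one trajectory has entered it. That part is fine (though your aside that $E_x^-$ is ``$\R^2$ minus lower\babelhyphen{hard}dimensional stable sets'' is false: the basin of another source, or the set of points escaping to $\infty$ in backward time, is two\babelhyphen{hard}dimensional; openness of $E_x^-$ has to come from the local picture at $x$, not from that description).

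Step 2 has a genuine gap. Your plan is to show $\partial E_x^-=\emptyset$ by finding a stationary point $x'\neq x$ on $\partial E_x^-$ whose ``unstable set meets the interior of $E_x^-$.'' That cannot happen: $E_{x'}^-$ and $E_x^-$ are disjoint by definition (a point has a single backward limit point in a generic gradient flow), so no contradiction with $V_x^y=S^1$ is produced this way. Worse, the argument never engages with the case that actually requires work, namely $y\in A_f^+$: if $y$ is a sink then $y$ itself lies on $\partial E_x^-$, satisfies $\varphi(y)>\varphi(x)$ rather than $\varphi(y)<\varphi(x)$, and has unstable set $\{y\}$, so your boundary analysis is vacuous there. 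Excluding the sink needs a topological input, which the paper supplies: the level curves $S(\alpha)$ swept out by the trajectories leaving $x$ are circles, and if every trajectory ended at a sink $y$ these circles together with $x$ and $y$ would assemble into an embedding of $S^2$ in $\R^2$, which is impossible. Similarly, once $y=\infty$ is known, the paper proves that $x$ is the only stationary point not by a boundary argument but by a winding\babelhyphen{hard}number computation: properness gives $\beta=\infty$, so $S(\alpha)$ exists for all $\alpha>0$, and since $\Ind(S(1/2),z)=0$ while $\Ind(S(\alpha_1),z)=1$ for large $\alpha_1$, every $z\neq x$ must lie on some $S(\alpha)$ and hence is noncritical. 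Your ``anticipated main obstacle'' paragraph in effect concedes that this core step is not carried out; as written, part (2) is unproved, and you should either follow the level\babelhyphen{hard}set/winding\babelhyphen{hard}number route or supply substitutes for the two topological facts it rests on.
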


\begin{proof}
Note that there is a neighbourhood
$U^-$ of $x$ such that $U^-$ is the unit ball and $f=\id$ on $U^-$
in some coordinate system. Now we can identify the set of directions 
$\{v_z\in S^1\}$ with $\partial U^-$.\vspace{1mm}

\noindent\emph{Ad (1).} Assume that $y\in A_f^+$.
Analogously as for $x$, there is a neighbourhood
$U^+$ of $y$ such that $U^+$ is a ball and $f=-\id$ on $U^+$.
Let $z_0\in V_x^y\subset S^1$.
There is $T\in\R$ such that $\eta^T(z_0)\in U^+$. Therefore we
can choose a neighbourhood $W$ of $z_0$ in $S^1$ such that
$\eta^T(z)\in U^+$ for all $z\in W$. Hence $W\subset V_x^y$,
and in consequence, $V_x^y$ is open.

Now let $y=\infty$. Since $f$ is proper, there is $\rho>0$ 
such that for $z\not\in D_\rho(0)$
$\lim_{t\to-\infty}\eta^t(z)=x$ implies $\lim_{t\to\infty}\eta^t(z)=\infty$
(see \cite[Prop~2.4]{S}). 
Let $z_0\in V_x^\infty\subset S^1=\partial U^-$.
Then there is $T>0$ such that $\eta^T(z_0)\not\in D_\rho(0)$.
Therefore there exists a neighbourhood $W$ of $z_0$ in $S^1$ such that
$\eta^T(z)\not\in D_\rho(0)$ for all $z\in W$,
which proves that $V_x^\infty$ is open.\vspace{1mm}

\noindent\emph{Ad (2).} 
Without loss of generality we can assume that $\varphi(x)=0$
and $\varphi(z)=1$ for $z\in\partial U^-$.
Set 
\[
\beta_z:=\sup{\{\varphi(\eta^t(z))\mid t\in[0,\omega_z)\}}
\]
for $z\in\partial U^-$ and 
$\beta:=\inf{\{\beta_z\mid z\in\partial U^-\}}$.
Since $\varphi$ increases on trajectories of $\eta$,
for $z\in\partial U^-$ and $\alpha\in(0,\beta)$
there is a unique $t(z,\alpha)\in(-\infty,\omega_z)$ 
such that $\varphi(\eta^{t(z,\alpha)}(z))=\alpha$.
Write 
\[
S(\alpha)=\{\eta^{t(z,\alpha)}(z)\mid z\in\partial U^-\}.
\]
Note that $S(\alpha)$ is homeomorphic to $S^1$
for $\alpha\in(0,\beta)$ and $S(1)=\partial U^-$.

We begin by proving that $y=\infty$.
Conversely, suppose that $y$ is a point.
Note that $y$ cannot be a saddle, because for a saddle there are
only two directions of approach along the flow.
Hence $y$ is a sink and $\varphi(y)=\beta$.
Once again, let $U^+$ be a disc neighbourhood of $y$ 
such that $f=-\id$ on $U^+$.
By compactness of $\partial U^-$,
while $\alpha$ is approaching $\beta$,
a level set $S(\alpha)$ is closer to $y$.
From this observation it follows that there is
an embedding $\gamma\colon S^2\to\R^2$,
which maps poles on $x$ and $y$, a contradiction.
This clearly forces that  $y=\infty$.

It remains to prove that $A_f\cup B_f=\{x\}$.
There is no loss of generality in assuming that $x=0$.
Since $\nabla\varphi$ is proper, we have $\beta=\infty$.
Therefore $S(\alpha)$ is defined for every $\alpha>0$. 
We show that any $z\neq0$ belongs to a level set
$S(\alpha)$ for some $\alpha>0$ and, in consequence, 
is not a critical point of $\varphi$. For $z\in U^-$
it is obvious. Let $z\not\in U^-$ and 
$\alpha_0=\max{\{\varphi(w)\mid\abs{w}\le\abs{z}\}}$.
Choose arbitrary $\alpha_1>\alpha_0$. Observe that
$\Ind(S(1/2),z)=0$ and 
$\Ind(S(\alpha_1),z)=\Ind(S(\alpha_1),0)=1$,
where $\Ind$ denotes the winding number.
Suppose, contrary to our claim, that 
$z\not\in S(\alpha)$ for $1/2<\alpha<\alpha_1$.
Then $\Ind(S(1/2),z)=\Ind(S(\alpha_1),z)$, a~contradiction.
This completes the proof. 
\end{proof}

The next four lemmas are of utmost importance for
the proof of Proposition~ \ref{prop:cancel}.
\begin{lem}\label{lem:key}
Let $x\in A_f^-$ and $B_f$ is nonempty.
Then there is $y\in B_f$ such that $x$ and $y$
are connected by a trajectory of $\eta$.
\end{lem}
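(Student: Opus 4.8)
The plan is to argue by contradiction: suppose the source $x\in A_f^-$ is not connected to any saddle by a trajectory of $\eta$. Then every trajectory leaving $x$ has $\omega^+$-limit in $A_f^+\cup\{\infty\}$, so the circle of directions $S^1\cong\partial U^-$ decomposes as $S^1=\bigcup_{y\in A_f^+\cup\{\infty\}} V_x^y$. By Lemma~\ref{lem:open}(1) each $V_x^y$ is open, and they are pairwise disjoint by uniqueness of trajectories. Since $S^1$ is connected and the decomposition is a disjoint union of nonempty open sets (discarding the empty ones), exactly one $V_x^y$ is nonempty and it equals all of $S^1$. By Lemma~\ref{lem:open}(2) this forces $x$ to be the only stationary point of $f$, so $A_f=\{x\}$ and $B_f=\emptyset$, contradicting the hypothesis that $B_f\neq\emptyset$.

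The only point that needs care is that the index set $A_f^+\cup\{\infty\}$ of the decomposition is at most countable (indeed finite, since $f$ is generic, hence has finitely many critical points) so that "a connected space is a disjoint union of open sets, hence one of them is everything" applies cleanly — a countable partition of a connected space into open sets must be trivial. I would also record explicitly why the decomposition covers $S^1$: for each $z\in\partial U^-$ the forward trajectory $\eta^t(z)$ either converges to some sink or, by properness (the estimate quoted from \cite[Prop.~2.4]{S}), escapes to infinity; in the assumed absence of a connecting saddle there is no third possibility, so $v_z\in V_x^y$ for the corresponding $y$.

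The main obstacle is ruling out the genuinely third type of behaviour of forward trajectories from $x$: a trajectory could in principle have a nonempty $\omega^+$-limit set that is neither a point nor infinity. Here is where genericity and the gradient structure do the work: since $f=\nabla\varphi$, the function $\varphi$ is strictly increasing along nonconstant trajectories, so the $\omega^+$-limit set consists of critical points of $\varphi$; genericity makes every critical point hyperbolic and isolated, so the $\omega^+$-limit set of a single trajectory is a single critical point (a source is impossible as an $\omega^+$-limit, leaving a saddle or a sink). Thus the only escape from the trichotomy "sink / saddle / infinity" is excluded, and the contradiction argument above goes through. I would assemble these observations into the contradiction and conclude that some trajectory from $x$ must in fact limit onto a saddle $y\in B_f$.
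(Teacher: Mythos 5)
Your argument is correct and is essentially the paper's own proof: both decompose $S^1$ into the open sets $V_x^y$, use Lemma~\ref{lem:open}(1) together with connectedness of $S^1$ and Lemma~\ref{lem:open}(2) to conclude that the sinks and infinity cannot absorb every direction, hence $V_x^{B_f}\neq\emptyset$. You merely phrase it as a contradiction and spell out the covering step (genericity plus the gradient structure forcing each forward trajectory to limit onto a single critical point or escape to infinity), which the paper leaves implicit.
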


\begin{proof}
Write $V_x^Y:=\cup_{y\in Y}V_x^y$ for 
$Y\subset A_f^+\cup B_f\cup\{\infty\}$.
Since $f$ is generic,
we have
\[
V_x^\infty\cup V_x^{A_f^+}\cup V_x^{B_f}=S^1.
\]
By Lemma~\ref{lem:open}, $V_x^\infty\cup V_x^{A_f^+}$
is an open strict subsets of $S^1$. Hence $V_x^{B_f}\neq\emptyset$,
which is our claim.
\end{proof}

Assume that $x\in A_f^-$. Write
\begin{align*}
A_f^+(x)&:=\{y\in A_f^+\mid V_x^y\neq\emptyset\},\\
B_f(x)&:=\{y\in B_f\mid V_x^y\neq\emptyset\}.
\end{align*}

\begin{lem}\label{lem:height}
If $A_f^+(x)\neq\emptyset$ and $B_f(x)\neq\emptyset$ then 
\[
\min{\{\varphi(y)\mid y\in A_f^+(x)\}}\ge
\min{\{\varphi(y)\mid y\in B_f(x)\}}.
\]
\end{lem}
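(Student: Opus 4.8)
The plan is to argue on the circle $S^1\cong\partial U^-$ of directions at the source $x$, exactly as in the proofs of Lemmas~\ref{lem:open} and~\ref{lem:key}, combining the boundary behaviour of the open sets $V_x^y$ with the monotonicity of $\varphi$ along the flow. Concretely, I will produce a saddle $y\in B_f(x)$ with $\varphi(y)\le\min\{\varphi(y')\mid y'\in A_f^+(x)\}$, which is exactly the asserted inequality. I would use throughout that $V_x^\infty\cup V_x^{A_f^+}\cup V_x^{B_f}=S^1$ (from the proof of Lemma~\ref{lem:key}), that $V_x^\infty$ and each $V_x^y$ with $y\in A_f^+$ are pairwise disjoint open subsets of $S^1$ (disjointness because forward limits are unique, openness by Lemma~\ref{lem:open}(1)), and that $\varphi$ is nondecreasing along trajectories of $\eta$ since $\frac{d}{dt}\varphi(\eta^t(z))=\abs{f(\eta^t(z))}^2$.

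First I would fix a sink $y_0\in A_f^+(x)$ realising $c_0:=\min\{\varphi(y)\mid y\in A_f^+(x)\}$; the minimum exists because $A_f^+(x)$ is finite and, by hypothesis, nonempty. The set $V_x^{y_0}$ is open by Lemma~\ref{lem:open}(1), nonempty by definition of $A_f^+(x)$, and a \emph{proper} subset of $S^1$, since $V_x^{y_0}=S^1$ would force $y_0=\infty$ by Lemma~\ref{lem:open}(2). As $S^1$ is connected, a nonempty proper open subset has nonempty boundary, so I may pick $z^*\in\partial V_x^{y_0}$. Then $\omega^+(z^*)$ can be neither $\infty$ nor a sink: the sets $V_x^\infty$ and $V_x^y$ (for $y$ a sink) are open and disjoint from $V_x^{y_0}$, whereas every neighbourhood of $z^*$ meets $V_x^{y_0}$. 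Hence $y:=\omega^+(z^*)$ is a saddle, and $y\in B_f(x)$.

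It remains to prove $\varphi(y)\le c_0$, which is where I expect the only genuine work. I would choose $z_n\to z^*$ with $z_n\in V_x^{y_0}$ (possible since $z^*\in\overline{V_x^{y_0}}$) and fix $\varepsilon>0$. Since $z^*\in V_x^{B_f}$ we have $\eta^t(z^*)\to y$ as $t\to+\infty$, so there is $T>0$ with $\varphi(\eta^T(z^*))>\varphi(y)-\varepsilon$; as $\eta^t(z^*)$ is defined for all $t\ge0$, the map $z\mapsto\eta^T(z)$ is defined and continuous near $z^*$, whence $\varphi(\eta^T(z_n))>\varphi(y)-\varepsilon$ for $n$ large. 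On the other hand $\omega^+(z_n)=y_0$ and $\varphi$ is nondecreasing along the trajectory through $z_n$, so $c_0=\varphi(y_0)\ge\varphi(\eta^T(z_n))>\varphi(y)-\varepsilon$. Letting $\varepsilon\to0$ gives $c_0\ge\varphi(y)\ge\min\{\varphi(y')\mid y'\in B_f(x)\}$, as required. The delicate point is this continuous-dependence step: a trajectory starting near $z^*$ must shadow the arc $\eta^t(z^*)$, $0\le t\le T$, long enough to pass within $\varepsilon$ of the saddle $y$ before escaping towards $y_0$, which is precisely continuous dependence on initial conditions over a compact time interval, justified by the fact — already exploited in the proof of Lemma~\ref{lem:open} — that this trajectory exists for all positive time. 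Everything else is soft topology on $S^1$ together with the monotonicity of $\varphi$.
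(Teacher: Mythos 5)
Your proof is correct and follows essentially the same route as the paper's: both select the sink realising the minimum, pass to a boundary direction of its set $V_x^{y_0}\subset S^1$, use Lemmas~\ref{lem:open} and~\ref{lem:key} to see that this direction flows forward to a saddle in $B_f(x)$, and then compare the critical values via monotonicity of $\varphi$ along trajectories together with continuous dependence on initial conditions. The only difference is cosmetic: the paper phrases the final comparison as a contradiction using disjoint neighbourhoods of the saddle and the sink and the ordering of the hitting times, whereas you take a direct $\varepsilon$-limit along a sequence of nearby trajectories.
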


\begin{proof}
Let $y_1\in A_f^+(x)$ such that 
$\varphi(y_1)=\min{\{\varphi(y)\mid y\in A_f^+(x)\}}$.
By the above and Lemma~\ref{lem:open}(2),  $\emptyset\neq V_x^{y_1}\neq S^1$.
Let $C$ denote a connected component of $V_x^{y_1}$ and let
$a$ be one of the ends of the arc $C$.
By Lemmas \ref{lem:open} and \ref{lem:key},
there is $y_0\in B_f(x)$ such that $a\in V_x^{y_0}$.

Suppose that $\varphi(y_0)>\varphi(y_1)$.
Therefore there are disjoint neighbourhoods $U_0$ of $y_0$ and
$U_1$ of $y_1$ such that for all $z\in U_0$ and $w\in U_1$
we have $\varphi(z)>\varphi(w)$ and, moreover, no trajectory 
leaves $U_1$. Observe that if $a'\in C$
is close enough to $a$ then there is $t_0$ such that
$\eta^{t_0}(a')\in U_0$. Moreover, since $a'\in C$, there is $t_1$
such that $\eta^{t_1}(a')\in U_1$. Note that $t_1>t_0$, because 
for $t\ge t_1$ we have $\eta^{t}(a')\in U_1$. Hence
\[
\varphi\big(\eta^{t_0}(a')\big)<
\varphi\big(\eta^{t_1}(a')\big),
\]
a contradiction. This gives our assertion.
\end{proof}

The following result, which can be found in \cite[Sec. 1]{L}, 
is devoted to the question of cancelling a pair of critical points.

\begin{lem}\label{lem:cancel}
Let us consider a $C^2$ function $\varphi\colon\R^2\to\R$
such that $\nabla\varphi$ is generic and its local flow $\eta$. 
Let $p$ and $q$ be two critical points of $\varphi$ satisfying the following conditions:
\begin{itemize}
\item $W^u(p)$ and $W^s(q)$ intersect transversely
       and the intersection consists of one orbit $l$ of $\eta$,
\item for  some $\epsilon>0$, each orbit of $\eta$ in $W^u(p)$ 
      distinct from $l$ crosses the level
      set $\varphi^{-1}\big(\varphi(q)+\epsilon\big)$.
\end{itemize}
Let $U$ denote an open neighbourhood of
the closure of $W^u(p)\cap \{\varphi\leq\varphi(q)+\epsilon\}$
such that the only critical points in $\cl U$ are $p$ and $q$. 
Then there is a path of smooth functions 
$\left\{\varphi_t\right\}_{ t\in I},$ such that:
\begin{itemize}
	\item $\varphi_0=\varphi$,
	\item for every $t\in I$, $\varphi_t$ coincides 
	with $\varphi$ on $\R^2\setminus U$,
	\item the function $\varphi_t\vert U$ 
	is has two nondegenerate critical points 
	when $0\leq t<1/2$; it has one degenerate 
	critical point when $t=1/2$ 
	and it has no critical points when $1/2<t\leq 1$.
\end{itemize}
\end{lem}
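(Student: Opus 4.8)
The plan is to prove Lemma~\ref{lem:cancel} by a two-stage argument: first normalize the function near the connecting orbit so that it looks like a standard planar saddle-node/handle-cancellation model, and then write down the cancelling deformation explicitly in the model coordinates. The reference \cite{L} carries out essentially this construction, so the role of the proof here is to adapt it to the planar setting and to our genericity hypotheses. First I would observe that, since $W^u(p)$ and $W^s(q)$ meet transversely in a single orbit $l$, the points $p$ and $q$ must have consecutive Morse indices: in dimension $2$ the only possibility (after possibly replacing $\varphi$ by $-\varphi$, which we are not allowed to do, so in fact we must keep track of both cases) is that one of them is a saddle and the other is a source or sink. I would treat the case $\operatorname{ind}p = 2$ (source), $\operatorname{ind}q = 1$ (saddle) and note the sink/saddle case is symmetric.

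The key steps, in order, are as follows. (i) Choose coordinates on a neighbourhood of $\cl(W^u(p)\cap\{\varphi\le\varphi(q)+\epsilon\})$ in which $\eta$ is gradient-like for $\varphi$ and the connecting orbit $l$ together with $p$, $q$ looks like the standard model on a strip $(-1,1)\times(-2,2)$, with $\varphi$ of the form $\varphi(q) + x^2 - g(y)$ for a suitable one-variable function $g$ with $g'(0)=0$, $g(0)<0$ corresponding to $p$ and the behaviour near $y=\pm$(ends) matching the given level-set-crossing hypothesis; this is where the second bullet of the hypothesis (every other unstable orbit of $p$ crosses $\varphi^{-1}(\varphi(q)+\epsilon)$) is used, to guarantee that the local model can be extended consistently and that $U$ can be chosen inside the chart. (ii) On this model, replace $-g(y)$ by a family $-g_t(y)$ that flattens the well: for $t<1/2$ the function $-g_t$ still has a local max and the adjacent critical level, giving the two critical points $p$ and $q$; at $t=1/2$ these merge into a single degenerate (inflection-type) critical point; for $t>1/2$ the function $-g_t$ is strictly monotone on the relevant interval, so $\varphi_t$ has no critical points in $U$. (iii) Interpolate between $g_t$ and $g$ outside a slightly smaller strip using a cutoff so that $\varphi_t$ agrees with $\varphi$ on $\R^2\setminus U$ and remains $C^2$ (indeed smooth), and check that no new critical points are created in the collar region because there $\partial_x\varphi_t = 2x \ne 0$ off the axis while on the axis the $y$-derivative is controlled by the monotone cutoff.

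I would expect the main obstacle to be step (i): producing the global-enough normal form so that the explicit model deformation in step (ii) can be grafted back into $\varphi$ without introducing spurious critical points or losing the gradient (i.e.\ conservative vector field) property. The transversality and the crossing hypothesis are exactly what make this possible — transversality gives a product neighbourhood of $l$, and the crossing condition says the unstable set of $p$ below level $\varphi(q)+\epsilon$ is a ``cap'' that deformation-retracts onto $l\cup\{p,q\}$, so a tubular-neighbourhood argument yields the chart. Once the chart is fixed, steps (ii)–(iii) are a routine one-dimensional construction (deforming $g$ through a cusp catastrophe) multiplied by the $x^2$ factor, and the fact that each $\varphi_t$ is a genuine function makes $\nabla\varphi_t$ automatically gradient, so the only things to verify are smoothness of the family, the critical-point count inside $U$, and the coincidence with $\varphi$ outside $U$ — all of which follow from the explicit formulas and the support of the cutoff. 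Finally I would remark that, since $\nabla\varphi_0=\nabla\varphi=f$ is generic and the construction can be performed so that $\varphi_1$ is still Morse (the deformation only destroys the pair $p,q$ and leaves all other critical points untouched), the endpoint $\nabla\varphi_1$ is again generic, which is what the subsequent application in the proof of Proposition~\ref{prop:cancel} requires.
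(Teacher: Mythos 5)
The paper offers no proof of Lemma~\ref{lem:cancel}: it is quoted from Laudenbach \cite[Sec.~1]{L}, so there is no in-text argument to compare yours against. Your sketch reconstructs what is essentially Laudenbach's (and Milnor's) proof of the cancellation theorem: reduce to the two index-adjacent cases source/saddle and saddle/sink (correct --- a transverse one-dimensional intersection forces $\dim W^u(p)+\dim W^s(q)=3$, and the single-orbit hypothesis excludes the source/sink case), build a normal form $\varphi(q)+x^2-g(y)$ on a neighbourhood of $\cl\bigl(W^u(p)\cap\{\varphi\le\varphi(q)+\epsilon\}\bigr)$, push the two critical points of $g$ through a birth--death (cusp) singularity, and cut off outside $U$. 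That is the same route the cited reference takes, so the strategy is sound. Two caveats. First, the steps you defer --- the semi-global normal form along $l$ (this is where the hypothesis that every other unstable orbit of $p$ crosses $\varphi^{-1}(\varphi(q)+\epsilon)$ is needed, to make $W^u(p)\cap\{\varphi\le\varphi(q)+\epsilon\}$ a compact cap) and the verification that grafting $g_t$ back into $\varphi$ creates no new critical points --- contain essentially all of the content. In particular your collar argument ``$\partial_x\varphi_t=2x\neq0$ off the axis'' ignores the contribution $\rho'(x)\bigl(g(y)-g_t(y)\bigr)$ of the transverse cutoff $\rho$, and $g-g_t$ is not small (it must move a local maximum past a local minimum), so this cancellation of derivatives is exactly the delicate point one must control; your sign bookkeeping in the model is also off ($g$ should have a positive local maximum at the source $p$ and a local minimum with value $0$ at the saddle $q$, not ``$g(0)<0$ corresponding to $p$''). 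Second, your closing claim that $\nabla\varphi_1$ is again generic deserves a word: the support of the deformation keeps all other critical points nondegenerate, but the flow inside $U$ is rerouted and could a priori create a saddle--saddle connection between critical points outside $U$; this is harmless (removable by a further arbitrarily small perturbation) but should be stated, since Proposition~\ref{prop:cancel} uses genericity of the endpoint.
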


\begin{lem}\label{lem:twoorbits}
Assume that $f=\nabla\varphi$ is generic.
Let $x\in A_f^-$ and $y\in B_f$.
If there are two trajectories of $\eta$ connecting
$x$ to $y$ then there is generic $f'$
otopic to $f$ such that $\abs{A_{f'}}<\abs{A_{f}}$ and 
$\abs{B_{f'}}<\abs{B_{f}}$.
\end{lem}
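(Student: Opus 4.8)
The plan is to use the two connecting trajectories together with the unstable manifold $W^u(x)$ to reduce, by a preliminary proper gradient homotopy, to a configuration in which the cancellation lemma (Lemma~\ref{lem:cancel}) applies. The source $x$ has a two-dimensional unstable manifold, so $W^u(x)=E_x^-$ is an open punctured neighbourhood (in fact, by genericity, an open disc with $x$ removed after stereographic normalisation), and the set of asymptotic directions $V_x^y$ of the two given orbits $l_0,l_1\subset E_x^-\cap E_y^+$ consists of (at least) two distinct points of $S^1$. These two orbits split a neighbourhood of $x$ into two sectors; the key geometric picture is that the two orbits $l_0,l_1$, together with $y$ and the stable manifold $W^s(y)$, bound a region whose closure contains no further critical point \emph{on one side}. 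I would first argue, using Lemma~\ref{lem:open} and the flow structure, that at least one of the two sectors cut out by $l_0$ and $l_1$ at $x$ — say the one whose boundary arc of directions is $C\subset S^1$ — can be chosen so small and ``clean'' that the only critical points whose unstable or stable manifolds meet the corresponding flow region are $x$ and $y$ themselves.

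Next I would perform a proper gradient homotopy supported near that sector which straightens the two orbits $l_0$ and $l_1$ so that, after the homotopy, $W^u(x)$ meets $W^s(y)$ transversely in exactly one orbit $l$, with all other orbits of $W^u(x)$ inside the chosen sector crossing a level set $\varphi^{-1}(\varphi(y)+\epsilon)$. Concretely, one modifies $\varphi$ only in an open set $U'$ (as in Lemma~\ref{lem:cancel}) so as to merge the two connecting orbits: one of them is pushed across a small level set so that it no longer flows into $y$, leaving a single transverse intersection. Here Lemma~\ref{lem:height} is what guarantees there is no obstruction ``from above'': since $y\in B_f(x)$ realises (or can be taken below) the minimal height among $A_f^+(x)$, no sink sits between $x$ and $y$ to block the modification. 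After this preparatory step, the hypotheses of Lemma~\ref{lem:cancel} are met for the pair $p=x$, $q=y$: $W^u(x)$ and $W^s(y)$ intersect transversely in a single orbit, and the remaining orbits of $W^u(x)$ cross the prescribed level set. Applying Lemma~\ref{lem:cancel} yields a path $\{\varphi_t\}_{t\in I}$ with $\varphi_0=\varphi$, coinciding with $\varphi$ outside $U$, and such that $\varphi_1$ has no critical points in $U$. Setting $f'=\nabla\varphi_1$, the homotopy $h_t=\nabla\varphi_t$ is gradient for each $t$ and, being the identity off the compact-closure region $\cl U$, is proper; hence $f\sim f'$ and $f'$ is again generic with $\abs{A_{f'}}=\abs{A_f}-1$ and $\abs{B_{f'}}=\abs{B_f}-1$, since exactly the source $x$ and the saddle $y$ have been removed.

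The main obstacle is the preparatory step: arranging that the two given orbits can be merged by a \emph{proper} gradient homotopy supported in a set $U$ whose closure contains no critical points other than $x$ and $y$. One must rule out that other critical points of $f$ (sinks, saddles, or other sources) lie in the region swept out between $l_0$ and $l_1$, and one must verify that after straightening the intersection the second bullet of Lemma~\ref{lem:cancel} holds, i.e.\ every remaining orbit of $W^u(x)$ in that sector really does escape across the level set $\varphi^{-1}(\varphi(y)+\epsilon)$ rather than accumulating on some hidden critical point. This is where Lemmas~\ref{lem:open}, \ref{lem:key}, and~\ref{lem:height} do the essential work: openness of the sets $V_x^y$ lets one shrink the sector $C$ until its closure meets only $V_x^y\cup\{a\}$ with $a$ an endpoint, and the height comparison forbids an interfering sink. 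Making the ``straightening + escape'' construction precise — presumably by an explicit local modification of $\varphi$ in flow-box coordinates near $y$, extended by a cutoff — is the technical heart of the argument; everything after that is a direct appeal to Lemma~\ref{lem:cancel} and the observation that the resulting homotopy is proper because it is constant outside a compact set.
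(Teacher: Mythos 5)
Your plan --- first homotope so that $W^u(x)$ meets $W^s(y)$ in a single orbit, then invoke Lemma~\ref{lem:cancel} --- is not the route the paper takes, and it rests on a geometric premise that is actually false. Since $y$ is a planar saddle, $W^s(y)\setminus\{y\}$ consists of exactly two orbits, so the two connecting trajectories $\Gamma_1,\Gamma_2$ are forced to be both stable separatrices of $y$, and $J=\Gamma_1\cup\Gamma_2\cup\{x,y\}$ is an invariant Jordan curve. Every point of the bounded component $G$ of $\R^2\setminus J$ has its forward orbit trapped in the compact set $\cl G$, so its $\omega$-limit is a critical point of $\cl G$; that limit cannot be $x$ (a source) and cannot be $y$ (because $W^s(y)\setminus\{y\}=\Gamma_1\cup\Gamma_2\subset J$). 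Hence $\Int G$ \emph{must} contain at least one further critical point. So there is no ``clean'' sector on the bounded side, and Lemmas~\ref{lem:open}, \ref{lem:key} and \ref{lem:height} give you no control over critical points sitting inside $G$ that need not even belong to $A_f^+(x)\cup B_f(x)$ (e.g.\ another source together with its basin). In particular the configuration you want to reach --- $x$ and $y$ alone in a neighbourhood $U$ with a single transverse connecting orbit, as required by Lemma~\ref{lem:cancel} --- cannot be produced by a homotopy supported near the two orbits.

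The ``merging/straightening'' step is a second, independent gap. A source-to-saddle connection in the plane is the intersection of the open two-dimensional set $W^u(x)=E_x^-$ with the one-dimensional $W^s(y)$; it is automatically transverse and therefore persists under every sufficiently small perturbation of $\varphi$. Destroying one of the two connections means rerouting a separatrix of $y$ out of the open basin $E_x^-$, a global operation whose feasibility is essentially the content of the lemma itself; a local modification in flow-box coordinates near $y$ extended by a cutoff cannot achieve it. The paper sidesteps both difficulties by not cancelling $x$ against $y$ at all: it takes the region bounded by the two separatrices, cuts it (together with a small collar around $y$) into curvelinear quadrangles $G_1$ and $G_2$ on whose boundaries $f$ is alternately tangent and normal, and uses Lemmas~\ref{lem:square}--\ref{lem:field} to refill them with a nonvanishing gradient field. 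This excises $y$ together with everything inside $G$ --- which, by the argument above combined with the vanishing of the winding number on $\partial G_1$, must include at least one point of $A_f$ and at least one of $B_f$ --- while $x$ itself survives; that is all the stated conclusion requires. If you want to salvage your approach, the place to start is to prove, rather than assume, that some region adjacent to one of the two orbits is free of other critical points; the observation above shows this fails on the bounded side.
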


\begin{proof}
Let us denote by $\Gamma_1$ and $\Gamma_2$ two trajectories
of $\eta$ (smooth curves) connecting $x$ to $y$.
Notice that these trajectories form a straight angle at $y$
(see Figure~\ref{fig:saddle}). Let $G$ stand for the domain bounded 
by $\Gamma_1$ and $\Gamma_2$. Since $x$ is a source,
we can choose points $x_1\in\Gamma_1$, $x_2\in\Gamma_2$
and a level subset $\Pi_x\subset G$ of $\varphi$
connecting them close enough to $x$. Similarly,
since $y$ is a saddle, we can choose 
$y_1\in\Gamma_1$, $y_2\in\Gamma_2$
such that $\varphi(y_1)=\varphi(y_2)$ and a smooth curve
$\Pi_y\subset G$ perpendicular 
to $\Gamma_i$ at $y_i$ for $i=1,2$. 
Let us denote by $G_1$ the domain bounded by
$\Pi_x$, $\Pi_y$ and trajectories connecting $x_i$ to $y_i$.
\begin{figure}[ht]
\centering
\includegraphics[scale=0.7,trim= 70mm 187mm 60mm 45mm]{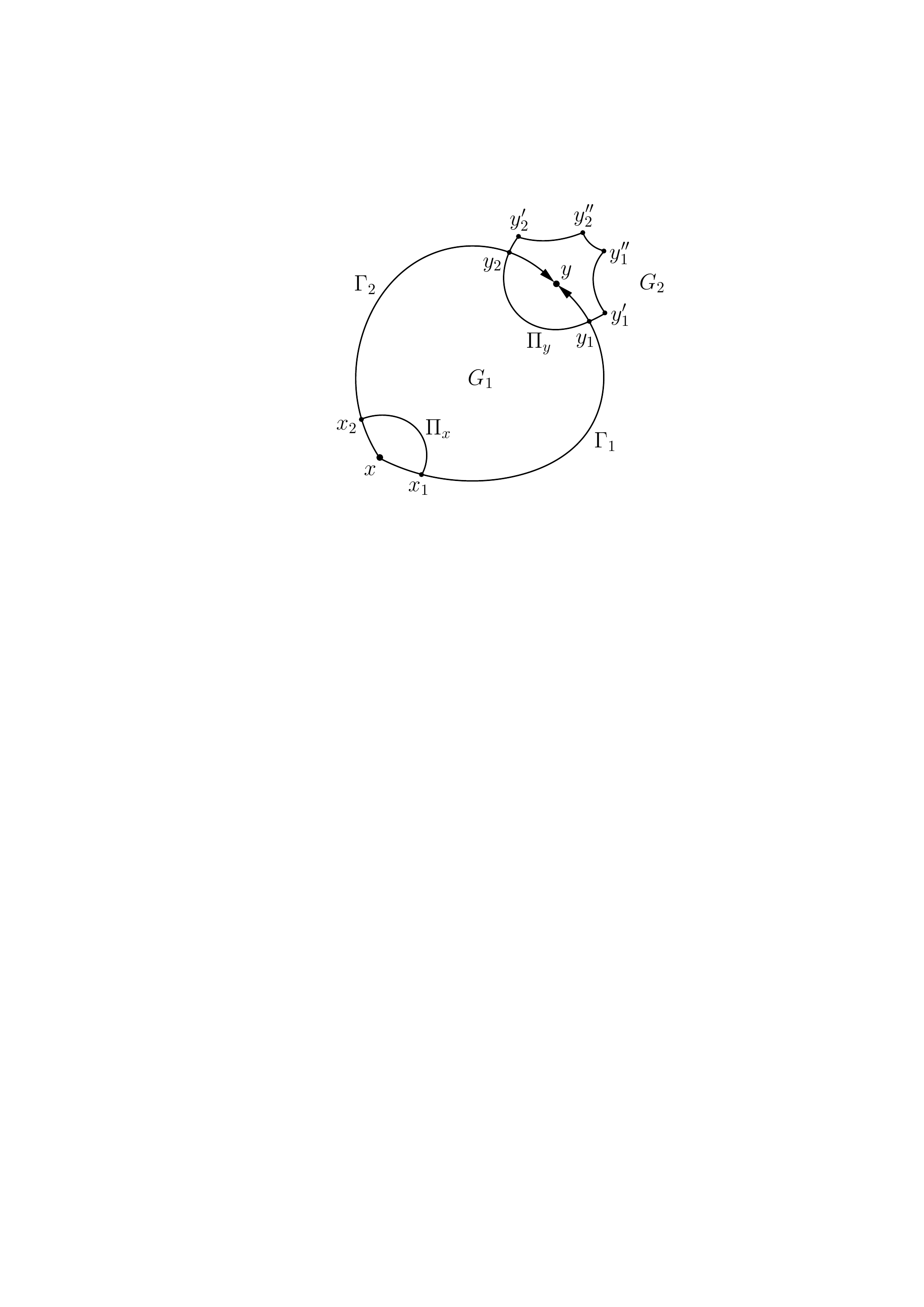}
\caption{Domains $G_1$ and $G_2$}
\label{fig:saddle}
\end{figure}

Let us extend $\Pi_y$ to a little longer smooth curve
$\Pi'_y=\arc y'_1y'_2$ such that segments $\arc y_iy'_i$
are contained in level sets of $\varphi$.
In the neighbourhood of the saddle $y$ choose points
$y''_i$ for $i=1,2$ on the trajectory starting from $y'_i$ 
such that $\varphi(y''_1)=\varphi(y''_2)$.
Points $y''_1$ and $y''_2$ are connected by
a short segment of a level set of $\varphi$.
Let us denote by $G_2$ the domain bounded by
$\Pi'_y$, trajectories connecting $y'_i$ to $y''_i$
and this short segment of a level set
connecting $y''_1$ and $y''_2$.
 
Since $f$ is defined and nonvanishing on
$\partial(G_1\cup G_2)$, we can extend
$\restrictionmap{f}{\partial(G_1\cup G_2)}$
to nonvanishing smooth vector field 
$\wt{f}\colon\partial G_1\cup\partial G_2\to\R^2$
such that $\wt{f}$ is perpendicular to the curve $\Pi_y$.
It is easy to check that $\wt{f}$ satisfies
the assumptions of Lemma~\ref{lem:field} for both
$G_1$ and $G_2$. As a conclusion we obtain that
$\wt{f}$ can be extended to nonvanishing gradient vector field 
$\wh{f}\colon G_1\cup G_2\to\R^2$. Finally, define
$f'\colon\R^2\to\R^2$ by the formula
\[
f'(z)=\begin{cases}
f(z)&\text{if $z\not\in G_1\cup G_2$},\\
\wh{f}(z)&\text{if $z\in G_1\cup G_2$}.
\end{cases}
\]
The map $f'$ satisfies the assertion of Lemma~\ref{lem:twoorbits}.
\end{proof}

\begin{proof}[Proof of Proposition \ref{prop:cancel}]
Without loss of generality we can assume that $A^-_{f}\neq\emptyset$
(in the case $A^+_{f}\neq\emptyset$ the proof is analogous).
Let $x\in A^-_{f}$. By Lemma~\ref{lem:key}, $B_f(x)\neq\emptyset$.
Moreover, by Lemma~\ref{lem:height}, there is $y\in B_f(x)$
which realizes the minimum of 
$F=\{\varphi(z)\mid z\in A^+_{f}(x)\cup B_f(x)\}$.
Applying Lemma~\ref{lem:bump} we can assume that
$y$ is the only minimum in~$F$. There are two possibilities.
There is either one or two trajectories connecting $x$ to $y$.
In the second case  it is enough to apply Lemma~\ref{lem:twoorbits}
to obtain at once the desired conclusion.
Now let us consider the first case.
Observe that all assumptions of Lemma~\ref{lem:cancel} are satisfied.
In consequence, a proper gradient homotopy
allows us to cancel both critical points $x$ and $y$,
which is our assertion.
\end{proof}

It occurs that Theorem \ref{thm:twothree} is now a consequence 
of Propositions~\ref{prop:generic}, \ref{prop:point} and~\ref{prop:cancel} .

\begin{proof}[Proof of Theorem \ref{thm:twothree}]
Let $f\in\Prop_1(\R^2)$. By Proposition~\ref{prop:generic}, 
without loss of generality we can assume that $f$ is generic.
Moreover, $\deg f=\abs{A_{f}}-\abs{B_{f}}=1$.
By~Proposition~\ref{prop:cancel}, there is $f'$ generic such that
$f\sim f'$, $f'^{-1}(0)=\{p\}$ and $p$ is a source or sink.
Hence, by Proposition~\ref{prop:point}, $f\sim\id_{\R^2}$ or $f\sim-\id_{\R^2}$.
\end{proof}

\section{Proof of Theorem \texorpdfstring{\ref{thm:twotwo}}{2.3}}
\label{sec:proof3}

The main result of this section is the following lemma.
\begin{lem}\label{lem:nozero}
If $f,f'\in\Prop(\R^2)$ have no zeroes then $f\sim f'$.
\end{lem}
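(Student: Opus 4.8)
The plan is to fix a single model and show every nowhere‑zero element of $\Prop(\R^2)$ is proper gradient homotopic to it; transitivity then gives $f\sim f'$. I take $\mu\colon\R^2\to\R$, $\mu(x,y)=\tfrac13x^3+x+\tfrac12y^2$, so that $\nabla\mu=(x^2+1,y)$ is a gradient vector field which is nowhere zero and proper (indeed $|\nabla\mu|^2\ge(x^2+1)^2$ and $|\nabla\mu|^2\ge y^2$, hence $|\nabla\mu|\to\infty$). Write $f=\nabla\varphi$ with $\varphi$ of class $C^1$. Since $\varphi$ has no critical points and $\nabla\varphi$ is proper we have $|\nabla\varphi|\to\infty$, and therefore also $\inf_{\R^2}|\nabla\varphi|>0$; so a uniformly small smoothing $\wt\varphi$ of $\varphi$ still has $\nabla\wt\varphi$ nowhere zero and proper, and $t\mapsto\nabla\bigl((1-t)\varphi+t\wt\varphi\bigr)$ is a proper gradient homotopy. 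Thus I may assume from the start that $\varphi$ is smooth (alternatively one invokes Proposition~\ref{prop:generic}, keeping the perturbation small enough to preserve nonvanishing).

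The core structural input is that, \emph{because $\nabla\varphi$ is proper}, every level set $\varphi^{-1}(c)$ is connected — hence a properly embedded line — so that $\varphi\colon\R^2\to\R$ is a surjective submersion with connected fibres and the integral curves of $\nabla\varphi$ foliate $\R^2$ as a trivial product foliation. Properness is essential here: a nowhere‑zero gradient that fails to be proper may have a disconnected level set, whereas for a proper one, if $\varphi^{-1}(c)$ had two components $L_1,L_2$, then a component $R$ of $\R^2\setminus\varphi^{-1}(c)$ with $\partial R\subset L_1\cup L_2$ meeting both would carry $\varphi\equiv c$ on $\partial R$ and, by $\nabla\varphi\ne0$, $\varphi>c$ (say) throughout $R$; an integral curve inside $R$ whose $\varphi$–values stay in a bounded subinterval of $(c,\sup_R\varphi)$ has no limit point (there are no critical points) and so escapes to infinity, contradicting the control on proper gradient flows near infinity that already underlies Lemma~\ref{lem:open} (cf.\ \cite[Prop.~2.4]{S}). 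I expect this connectedness statement, with its careful use of properness, to be the main obstacle.

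Granted the product structure, $\varphi$ and $\mu$ are both trivial fibrations of $\R^2$ over $\R$: choosing a complete transversal (a curve meeting each fibre exactly once) and flowing along $\nabla\varphi$ one builds a diffeomorphism $\alpha$ of $\R^2$, smoothly isotopic to the identity, with $\varphi=\mathrm{pr}_1\circ\alpha$, and likewise $\beta$ with $\mu=\mathrm{pr}_1\circ\beta$; then $\Phi:=\beta^{-1}\circ\alpha$ is a diffeomorphism of $\R^2$, isotopic to the identity, with $\mu\circ\Phi=\varphi$. Fixing a smooth isotopy $\{\Phi_t\}$ from $\Phi_0=\id$ to $\Phi_1=\Phi$, I put $h(t,z):=\nabla_z\bigl(\mu(\Phi_t(z))\bigr)=D\Phi_t(z)^{\top}\nabla\mu(\Phi_t(z))$. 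Each $h_t=\nabla(\mu\circ\Phi_t)$ is a gradient, $h_0=\nabla\mu$, and $h_1=\nabla(\mu\circ\Phi)=\nabla\varphi=f$; and since $|h(t,z)|\ge\sigma_{\min}(D\Phi_t(z))\,|\nabla\mu(\Phi_t(z))|$, the homotopy $h$ is proper as soon as the isotopy is chosen tame at infinity, i.e.\ with $\inf_{t,z}\sigma_{\min}(D\Phi_t(z))>0$ and $\Phi_t(z)\to\infty$ uniformly in $t$ as $|z|\to\infty$. Carrying out the straightening of the previous paragraph with this uniform control — the remaining technical point, for which the Appendix lemmas are used — then yields $f\sim\nabla\mu$, and with $f'\sim\nabla\mu$ we conclude $f\sim f'$. (If preferred, the purely compact part of the deformation can instead be absorbed by Parusi\'nski's theorem \cite[Thm~1]{P} applied on a large disc, the genuinely new content over Theorem~\ref{thm:one} being entirely the behaviour at infinity.)
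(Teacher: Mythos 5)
Your strategy (reduce everything to a fixed model $\nabla\mu$ by globally straightening the foliation by level sets) is genuinely different from the paper's, and its structural core is in fact true: for a proper, nowhere-vanishing gradient one has $\inf|\nabla\varphi|=c>0$, every maximal trajectory is a properly embedded curve of infinite length in both directions, hence $\varphi$ changes by at least $c$ per unit of arclength and is unbounded in both directions along every trajectory; from this one can indeed rule out Reeb-type configurations and conclude that every level set is a single properly embedded line. But your written justification of connectedness does not yet close: a backward trajectory from a point of the intermediate region $R$ need not stay in $R$ with bounded $\varphi$-values --- it may simply exit through $\partial R$, and then the single-curve argument yields no contradiction. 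The actual contradiction comes from partitioning $R$ into the (open, nonempty, disjoint) sets of points whose backward trajectories exit through $L_1$ respectively $L_2$, violating connectedness of $R$. More seriously, the last step is a genuine gap, not a technicality: you need an isotopy $\{\Phi_t\}$ from $\id$ to $\Phi$ with $\inf_{t,z}\sigma_{\min}(D\Phi_t(z))>0$ and $\Phi_t(z)\to\infty$ uniformly in $t$. Nothing in the construction of $\Phi$ by flowing to a transversal gives a lower bound on $\sigma_{\min}(D\Phi)$ at infinity (trajectories may converge or spread without control), the standard connectivity of $\mathrm{Diff}^+(\R^2)$ (e.g.\ Alexander's trick) produces isotopies with no such uniform bounds, and the Appendix lemmas you invoke (diffeotopies into a ball, Milnor's trick, bump functions, fields on quadrangles) address none of this. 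Since the whole point of the lemma is precisely the behaviour at infinity, deferring it cannot be accepted; you would also need to address the orientation of $\Phi$ (fixable by reversing the parametrization of the transversal, but it must be said).

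For contrast, the paper avoids all global foliation theory. It uses only $c=\min|f|>0$ and a radial amplification: with $\xi_{1/2}(x)=(1+\tfrac12|x|)x$ and $\Xi_{1/2}(g)=\nabla(\psi\circ\xi_{1/2})$ for $g=\nabla\psi$, one gets $|\Xi_{1/2}(g)(x)|\ge(1+\tfrac12|x|)\min|g|$, so $\Xi_{1/2}$ converts ``bounded below in norm'' into ``proper.'' The homotopy first deforms $f$ to $\Xi_{1/2}(f)$ through $\Xi_t(f)$, then shrinks the argument via $f_t(x)=f((2-2t)x)$ inside the amplification down to the constant field $f(0)$, and finally joins the nonzero constants; every stage is proper by the displayed inequality. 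If you want to keep your fibration picture, the missing ingredient is a quantitative straightening at infinity; otherwise I would recommend replacing the straightening by an amplification device of this kind.
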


\begin{proof}
Let $f=\nabla\varphi$. For $t\in[1/2,1]$ write $f_t(x):=f((2-2t)x)$.
Set $c:=\min\{\abs{f(x)}\mid x\in\R^2\}$.
Observe that $c>0$ and for each $t\in[1/2,1]$
\begin{itemize}
  \item $f_t$ are gradient maps,
	\item $\min{\{\abs{f_t(x)}\mid x\in\R^2\}}\ge c$.
\end{itemize}

Next for $t\in[0,1/2]$ put $\xi_t(x):=(1+t\abs{x})x$ and
\[
\Xi_t(f)(x):=\nabla\big(\varphi(\xi_t(x))\big)=
D\xi_t^T(x)\nabla\varphi(\xi_t(x))=
D\xi_t^T(x)f(\xi_t(x)).
\]
Let us check that following inequalities
\begin{enumerate}
	\item $\abs{\xi_t(x)}\ge\abs{x}$,
	\item $\abs{D\xi_t^T(x)(v)}\ge(1+t\abs{x})\abs{v}$,
	\item $\abs{\Xi_t(f)(x)}\ge\abs{f(\xi_t(x))}$,
	\item $\big\lvert\Xi_\frac12(f_t)(x)\big\rvert\ge\big(1+\frac12\abs{x}\big)c$ for $t\in[1/2,1]$.
\end{enumerate}
The first one is obvious. The second follows from the fact that
for a~given $x$ the matrix $D\xi_t(x)$ is diagonal in some basis 
with the elements $(1+2t\abs{x})$ and $(1+t\abs{x})$ on the diagonal.
The third and fourth follow immediately from the second.

Finally, define a homotopy
\[
h_t(x)=\begin{cases}
\Xi_t(f)(x)& \text{if $t\in[0,1/2]$},\\
\Xi_\frac12(f_t)(x)& \text{if $t\in[1/2,1]$}.
\end{cases}
\]
The homotopy $h_t$ is obviously gradient.
Moreover, it is proper. Namely, the first part
of the homotopy is proper from (1) and (3)
and the properness of $f$, and the second part
is proper from (4).

Observe that the homotopy $h_t$ connects $f$ to $\Xi_\frac12(f(0))$,
where $f(0)$ denotes a constant vector field on $\R^2$.
What is left is to show that 
\[
\Xi_\frac12(f(0))\sim\Xi_\frac12(f'(0)).
\]
Note that there is a homotopy $g_t$ between $f(0)$ and $f'(0)$
consisting of nonzero constant vector fields. 
It is immediate that the homotopy $\Xi_\frac12(g_t)$
is proper gradient, which completes the proof.
\end{proof}

\begin{rem}
The last lemma is true for $\Prop(\R^n)$ ($n\ge2$) with the same proof.
\end{rem}

\begin{proof}[Proof of Theorem \ref{thm:twotwo}]
Let $f,f'\in\Prop_0(\R^2)$.
By Proposition~\ref{prop:cancel}, we can assume that $f$ and $f'$
have no zeroes. Lemma~\ref{lem:nozero} now shows that $f\sim f'$.
\end{proof}

\section{Proof of Theorem \texorpdfstring{\ref{thm:twoone}}{2.2}}
\label{sec:proof4}

Let $f\in\Prop_k(\R^2)$ ($k>1$) be generic.
By Proposition~\ref{prop:cancel}, we can assume that
$f$ has no saddles. Observe that
to complete the proof it is enough to show that
if $A^-_f\neq\emptyset$ then $\abs{A^-_f}=\abs{A_f}=1$,
which contradicts our assumption $k>1$
(the case $A^+_f\neq\emptyset$ is analogous).

Let $x\in A^-_f$. Since $\cup_{y\in A^+_f\cup\{\infty\}}V_x^y=S^1$
and, by Lemma~\ref{lem:open}(1), the sets $V_x^y$ are open,
we have $V_x^y=S^1$ for some $y\in A^+_f\cup\{\infty\}$. 
Hence, by Lemma~\ref{lem:open}(2), $y=\infty$ and $x$ is the only 
stationary point of $f$, i.e.,\ $\abs{A^-_f}=\abs{A_f}=1$.\qed

\appendix
\section{} 
\label{sec:appA}
\subsection{Diffeotopies} 
\begin{lem}\label{lem:proof1a}
There is a diffeotopy on the image $\theta\colon I\times\R^n\to\R^n$
such that $\theta_0=\id_{\R^n}$, $\theta_1(\R^n)=B_r(0)$ 
and $B_r(0)\subset\theta_t(\R^n)$ for all $t\in I$.
\end{lem}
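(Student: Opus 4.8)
The plan is to build $\theta$ out of a one‑parameter family of \emph{radial} diffeomorphisms of $\R^n$: such maps are automatically smooth (including at the origin, if written correctly), have symmetric derivative, and are in fact themselves gradient vector fields — the last being the extra feature that the later composition lemmas will need.

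First I would pass to a one‑variable problem. I look for $\theta$ of the form
\[
\theta(t,x)=\kappa(\abs{x}^2,t)\,x ,
\]
with $\kappa\colon[0,\infty)\times I\to(0,\infty)$ smooth. Since $\kappa$ depends on $x$ only through $\abs{x}^2$, the map $\theta$ is smooth on all of $I\times\R^n$, and for fixed $t$ it preserves rays from the origin. Setting $\psi(s,t):=s\,\kappa(s^2,t)$, the map $\theta_t$ is injective with open image exactly when $s\mapsto\psi(s,t)$ is a strictly increasing bijection of $[0,\infty)$ onto $[0,M(t))$ for some $M(t)\in(0,\infty]$, in which case $\theta_t(\R^n)=B_{M(t)}(0)$ (with the convention $B_\infty(0):=\R^n$); moreover $\det D\theta_t(x)=\partial_s\psi(\abs{x},t)\cdot\bigl(\psi(\abs{x},t)/\abs{x}\bigr)^{n-1}>0$ throughout, so each such $\theta_t$ is a diffeomorphism onto its image (a bijective local diffeomorphism onto an open set). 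Hence it suffices to choose $\kappa$ so that $\psi(s,0)=s$, $M(1)=r$, and $M(t)\ge r$ for all $t\in I$.

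Next I would exhibit an explicit choice: take $a(t):=1+(r-1)t$ and $b(t):=t^2$ and set $\kappa(\sigma,t):=a(t)/\sqrt{1+b(t)\sigma}$, i.e.
\[
\theta(t,x)=\frac{\bigl(1+(r-1)t\bigr)\,x}{\sqrt{1+t^2\abs{x}^2}} .
\]
Here $\kappa$ is smooth and strictly positive on $[0,\infty)\times I$ since $a(t)>0$ for $t\in I$ and $1+b(t)\sigma\ge1$; one computes $\psi(s,t)=a(t)s/\sqrt{1+b(t)s^2}$ with $\partial_s\psi(s,t)=a(t)/(1+b(t)s^2)^{3/2}>0$, so $\psi(\cdot,t)$ is an increasing bijection onto $[0,M(t))$ with $M(0)=\infty$ and $M(t)=a(t)/\sqrt{b(t)}=\tfrac1t+r-1$ for $t\in(0,1]$. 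Since $\tfrac1t+r-1\ge r$ on $(0,1]$, with equality only at $t=1$, this yields $\theta_0=\id_{\R^n}$, $\theta_1(\R^n)=B_r(0)$, and $B_r(0)\subseteq\theta_t(\R^n)$ for all $t\in I$, while each $\theta_t$ is a diffeomorphism onto its image by the Jacobian computation above (injectivity being clear, as the direction of $x$ is preserved and $\abs{\theta_t(x)}$ is strictly increasing in $\abs{x}$).

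I do not anticipate a real obstacle; the only delicate points are ensuring smoothness at the origin — handled by letting $\kappa$ depend on $x$ only through $\abs{x}^2$ — and arranging that the ``waist radius'' $M(t)$ stays $\ge r$ until $t=1$, which is forced by the choice $b(t)=t^2$. If the downstream argument also wants each $\theta_t$ to be a gradient map, one records that $\theta_t=\nabla\Phi_t$ with $\Phi_t(x):=\int_0^{\abs{x}}\psi(s,t)\,ds$.
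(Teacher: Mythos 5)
Your proof is correct and follows essentially the same route as the paper's: both reduce to a one‑variable radial profile and write down an explicit formula (the paper interpolates linearly between $s$ and $\tfrac{2r}{\pi}\arctan s$, you use $a(t)s/\sqrt{1+t^2s^2}$). Your verification of smoothness at the origin, strict monotonicity of the profile, and the image computation $M(t)=\tfrac1t+r-1\ge r$ is more detailed than the paper's one‑line argument, but the underlying idea is identical.
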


\begin{proof}
Let us consider a function $\mu\colon[0,\infty)\to[0,r)$ of the form
$\mu(s)=\frac{2r}{\pi}\arctan s$. Define a straightline homotopy
$\mu_t(s)=(1-t)s+t\mu(s)$. The function $\theta\colon I\times\R^n\to\R^n$
given by $\theta(t,x)=\mu_t(\abs{x})\frac{x}{\abs{x}}$ 
is a diffeotopy with the desired properties.
\end{proof}

\begin{lem}\label{lem:proof1b}
Let $\varphi\colon\R^n\to\R$ be a $C^1$ function,
$\nabla\varphi\in\V(\R^n)$ and 
$\theta\colon I\times\R^n\to\R^n$ be a diffeotopy on the image
such that $(\nabla\varphi)^{-1}(0)\subset\theta_t(\R^n)$ 
for all $t\in I$.
Then $h\colon I\times\R^n\to\R^n$ given by
\[
h(t,x)=\nabla_x(\varphi(\theta (t,x)))
\]
is a gradient homotopy.
\end{lem}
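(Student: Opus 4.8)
The plan is to check directly the two conditions defining a gradient homotopy. Condition (1) is immediate: for each fixed $t$ the map $\theta_t\colon\R^n\to\theta_t(\R^n)$ is a diffeomorphism onto an open subset of $\R^n$, so $\varphi\circ\theta_t\colon\R^n\to\R$ is a $C^1$ function and, by the definition of $h$, we have $h_t=\nabla(\varphi\circ\theta_t)$; thus $h_t$ is gradient, and $h$ is continuous on $I\times\R^n$ since $\theta$ is smooth and $\varphi$ is $C^1$. For condition (2), set $K:=(\nabla\varphi)^{-1}(0)$, which is compact by hypothesis. The chain rule gives
\[
h(t,x)=D\theta_t(x)^{T}\,(\nabla\varphi)(\theta_t(x)),
\]
and since $\theta_t$ is a diffeomorphism the matrix $D\theta_t(x)$ is invertible, so $h(t,x)=0$ if and only if $\theta_t(x)\in K$. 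In other words $h^{-1}(0)=\theta^{-1}(K)$, so everything reduces to showing that $\theta^{-1}(K)$ is compact --- and this is exactly the point where the assumption $K\subset\theta_t(\R^n)$ for all $t$ enters, since it is what keeps the fibres $\theta_t^{-1}(K)$ from running off to infinity as $t$ varies.

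To get uniform control in $t$, I would pass to the ``fattened'' map $\Theta\colon I\times\R^n\to I\times\R^n$, $\Theta(t,x)=(t,\theta(t,x))$, with image $\Omega:=\{(t,y)\mid y\in\theta_t(\R^n)\}$. The map $\Theta\colon I\times\R^n\to\Omega$ is a continuous bijection, because each $\theta_t$ maps $\R^n$ bijectively onto $\theta_t(\R^n)$. Moreover it is a local diffeomorphism of manifolds with boundary by the inverse function theorem: its derivative is block triangular with diagonal blocks $1$ and $D\theta_t(x)$, hence invertible, and $\Theta$ carries the boundary $\{0,1\}\times\R^n$ into itself (and the interior into the interior). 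A continuous open bijection is a homeomorphism, so $\Theta\colon I\times\R^n\to\Omega$ is a homeomorphism onto its image, in particular a proper map. Now the hypothesis $K\subset\theta_t(\R^n)$ for every $t$ says precisely that $I\times K\subset\Omega$, while $I\times K$ is compact; hence $\theta^{-1}(K)=\Theta^{-1}(I\times K)$ is compact, which completes the proof.

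The main obstacle is this final step, the compactness of $\theta^{-1}(K)$, and concretely the uniformity in $t$; the device above absorbs it into the statement that $\Theta$ is a homeomorphism onto $\Omega$. If one prefers to avoid invoking the inverse function theorem on a manifold with boundary, the same conclusion can be reached by a sequential argument: given $(t_n,x_n)$ with $\theta(t_n,x_n)\in K$ and $\abs{x_n}\to\infty$, extract subsequences with $t_n\to t_\ast$ in $I$ and $\theta(t_n,x_n)\to y_\ast\in K\subset\theta_{t_\ast}(\R^n)$, write $y_\ast=\theta_{t_\ast}(x_\ast)$, and use continuity of the inverse of $\Theta$ near $(t_\ast,y_\ast)$ to deduce $x_n\to x_\ast$, contradicting $\abs{x_n}\to\infty$. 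Either way, the crux is the joint continuity in $(t,y)$ of $\theta_t^{-1}(y)$.
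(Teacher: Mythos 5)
Your proposal is correct and follows essentially the same route as the paper: the paper also passes to the fattened map $\wt{\theta}(t,x)=(t,\theta(t,x))$, notes it is a homeomorphism onto its image, and identifies $h^{-1}(0)$ with $\wt{\theta}^{-1}\bigl(I\times(\nabla\varphi)^{-1}(0)\bigr)$, which is compact by the hypothesis $(\nabla\varphi)^{-1}(0)\subset\theta_t(\R^n)$. You merely spell out the chain-rule computation and the homeomorphism claim in more detail than the paper does.
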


\begin{proof}
It is enough to check that $h^{-1}(0)$ is compact.
Let us define $\wt{\theta}\colon I\times\R^n\to I\times\R^n$
by $\wt{\theta}(t,x)=(t,\theta (t,x))$. Observe that
$\wt{\theta}$ is a homeomorphism on the image and
\[
h^{-1}(0)=
\wt{\theta}^{-1}(I\times(\nabla\varphi)^{-1}(0)),
\]
which proves the lemma.
\end{proof}

\begin{lem}\label{lem:proof1c}
Assume that $\gamma\colon\R^n\to B_r(0)$ is a diffeomorphism and 
$\zeta\colon I\times D_r(0)\to\R$ is continuous and $C^1$
with respect to $x$ such that 
$\nabla_x\zeta(t,x)\neq0$ for all $t\in I$ and $x\in\partial D_r(0)$.
Then the function $h\colon I\times\R^n\to\R^n$ given by
\[
h(t,x)=\nabla_x(\zeta(t,\gamma(x)))
\]
is a gradient homotopy.
\end{lem}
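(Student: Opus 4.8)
The plan is to follow the pattern of the proof of Lemma~\ref{lem:proof1b}: the slicewise gradient condition is immediate, and the real content is the compactness of $h^{-1}(0)$, for which the hypothesis that $\nabla_x\zeta$ does not vanish on $I\times\partial D_r(0)$ is exactly what is required.

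First I would note that for each fixed $t\in I$ the function $y\mapsto\zeta(t,y)$ is $C^1$ on $D_r(0)$, and $\gamma$ is a diffeomorphism with image $B_r(0)\subset D_r(0)$, so $x\mapsto\zeta(t,\gamma(x))$ is a $C^1$ function $\R^n\to\R$ and $h_t=\nabla_x(\zeta(t,\gamma(x)))$ is its gradient. By the chain rule,
\[
h(t,x)=D\gamma(x)^T\,(\nabla_x\zeta)\big(t,\gamma(x)\big),
\]
which exhibits $h$ as a continuous map $I\times\R^n\to\R^n$ (a product of the continuous matrix-valued map $x\mapsto D\gamma(x)^T$ and the continuous vector-valued map $(t,x)\mapsto(\nabla_x\zeta)(t,\gamma(x))$).

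The crux is the compactness of $h^{-1}(0)$. Since $\gamma$ is a diffeomorphism, $D\gamma(x)$, and hence $D\gamma(x)^T$, is invertible for every $x$, so the displayed formula gives $h(t,x)=0$ if and only if $(\nabla_x\zeta)(t,\gamma(x))=0$. Put
\[
K:=\{(t,y)\in I\times D_r(0)\mid\nabla_x\zeta(t,y)=0\}.
\]
Being the preimage of $\{0\}$ under a continuous map, $K$ is closed in the compact space $I\times D_r(0)$, hence compact; and the boundary hypothesis says precisely that $K\cap(I\times\partial D_r(0))=\emptyset$, so $K\subset I\times B_r(0)$. Letting $\wt\gamma\colon I\times\R^n\to I\times B_r(0)$, $\wt\gamma(t,x)=(t,\gamma(x))$, be the evident homeomorphism, the equivalence above yields $h^{-1}(0)=\wt\gamma^{-1}(K)$, which is therefore compact. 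Hence $h$ is a gradient homotopy.

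I expect the only point requiring care to be this last paragraph — namely that the nonvanishing condition is imposed on the \emph{partial} gradient $\nabla_x\zeta$ and must be transported correctly through the diffeomorphism $\gamma$ — while the rest runs in close parallel to Lemma~\ref{lem:proof1b}.
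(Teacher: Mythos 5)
Your proof is correct and follows essentially the same route as the paper: both reduce the problem to showing $h^{-1}(0)=\wt\gamma^{-1}\big((\nabla_x\zeta)^{-1}(0)\big)$ is compact via the homeomorphism $\wt\gamma(t,x)=(t,\gamma(x))$. You merely make explicit two steps the paper leaves implicit, namely the chain-rule identity $h(t,x)=D\gamma(x)^T(\nabla_x\zeta)(t,\gamma(x))$ with $D\gamma(x)^T$ invertible, and the fact that the boundary hypothesis forces the zero set of $\nabla_x\zeta$ into $I\times B_r(0)$, where $\wt\gamma^{-1}$ applies.
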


\begin{proof}
The map $\wt{\gamma}\colon I\times\R^n\to I\times B_r(0)$
given by $\wt{\gamma}(t,x)=(t,\gamma(x))$ is a homeomorphism,
the set $(\nabla_x\zeta)^{-1}(0)$ is compact and
\[
h^{-1}(0)=
\wt{\gamma}^{-1}\big((\nabla_x\zeta)^{-1}(0)\big).
\]
Hence the last set is compact.
\end{proof}

\subsection{Milnor's trick}
Let $f\colon\R^2\to\R^2$ be a map differentiable 
at $0$ and $f(0)=0$.
We define a function $h\colon I\times\R^2\to\R^2$
by the formula
\[
h(t,x)=\begin{cases}
\dfrac{f(tx)}{t}& \text{if $t\neq0$},\\
Df_0(x)& \text{if $t=0$}.
\end{cases}
\]

\begin{lem}\label{lem:proof2}
\mbox{}
\begin{enumerate}
	\item The function $h$ is continuous.
	\item If $Df_0$ is nonsingular, $f$ is proper 
	and $f^{-1}(0)=\{0\}$ then $h$ is proper. 
\end{enumerate}
\end{lem}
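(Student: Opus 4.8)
The plan is to prove the two parts in order, establishing continuity first so that it can be reused for the properness argument. Throughout I write the differentiability of $f$ at $0$ as $f(y)=Df_0(y)+R(y)$, where $\abs{R(y)}/\abs{y}\to0$ as $y\to0$.

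For part (1), continuity away from the slice $\{t=0\}$ is immediate, since there $h$ is a composition of continuous maps divided by the nonvanishing scalar $t$. The only thing to check is continuity at a point $(0,x_0)$. For $t\neq0$ the expansion gives $h(t,x)=Df_0(x)+R(tx)/t$, and along any sequence $(t,x)\to(0,x_0)$ one has $tx\to0$ while $x$ stays bounded, so (for $x\neq0$) $\abs{R(tx)/t}=\big(\abs{R(tx)}/\abs{tx}\big)\abs{x}\to0$, while $Df_0(x)\to Df_0(x_0)$ by continuity of the linear map $Df_0$; the case $x=0$ is trivial. Since $h(0,x)=Df_0(x)\to Df_0(x_0)$ as well, we get $h(t,x)\to Df_0(x_0)=h(0,x_0)$, proving continuity.

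For part (2), fix a compact $K\subset\R^2$ and set $M=\sup_{y\in K}\abs{y}$. By part (1) the set $h^{-1}(K)$ is closed in $I\times\R^2$, hence closed in $\R^3$; since $t$ ranges over the bounded interval $I$, it suffices to bound the $x$-coordinate, for then $h^{-1}(K)$ is closed and bounded, hence compact. I would argue by contradiction, taking a sequence $(t_n,x_n)$ with $h(t_n,x_n)\in K$ and $\abs{x_n}\to\infty$ and, after passing to a subsequence, $t_n\to t_*\in I$. Two facts are used repeatedly: nonsingularity of $Df_0$ yields a constant $c>0$ with $\abs{Df_0(v)}\ge c\abs{v}$ for all $v$; and properness of $f$ makes $f^{-1}(D_M(0))$ compact, hence bounded by some $\rho>0$. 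If $t_*>0$, then for large $n$ we have $t_n\ge t_*/2$ and $\abs{f(t_nx_n)}=t_n\abs{h(t_n,x_n)}\le M$, so $t_nx_n\in f^{-1}(D_M(0))$ and $\abs{x_n}\le\rho/t_n\le 2\rho/t_*$, a contradiction. If $t_*=0$ and $t_n=0$ for infinitely many $n$, then $Df_0(x_n)=h(0,x_n)\in K$ and the lower bound gives $\abs{x_n}\le M/c$, again a contradiction. The remaining subcase is $t_n\to0^+$: setting $y_n=t_nx_n$ we have $\abs{f(y_n)}\le t_nM\to0$ and $y_n\in f^{-1}(D_M(0))$, so along a subsequence $y_n\to y_*$ with $f(y_*)=0$, whence $y_*=0$ by $f^{-1}(0)=\{0\}$ and therefore $y_n\to0$. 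Writing $h(t_n,x_n)=Df_0(x_n)+R(y_n)/t_n$ with $\abs{R(y_n)/t_n}=\big(\abs{R(y_n)}/\abs{y_n}\big)\abs{x_n}=:\epsilon_n\abs{x_n}$ and $\epsilon_n\to0$, the lower bound yields $c\abs{x_n}\le\abs{h(t_n,x_n)}+\epsilon_n\abs{x_n}\le M+\epsilon_n\abs{x_n}$, so $\abs{x_n}\le M/(c-\epsilon_n)$ remains bounded, the final contradiction.

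The hard part is precisely this last regime $t_n\to0^+$ with $\abs{x_n}\to\infty$, where $h(t_n,x_n)=f(y_n)/t_n$ but the argument $y_n=t_nx_n$ need not tend to $0$ a priori. Properness of $f$ is genuinely needed here: it is what confines $y_n$ to the compact set $f^{-1}(D_M(0))$, so that $f(y_n)\to0$ can be upgraded, via $f^{-1}(0)=\{0\}$, to $y_n\to0$, after which the linear approximation together with the nonsingularity of $Df_0$ closes the argument. This is exactly the step that would be unavailable if one only knew $h^{-1}(0)$ to be compact, and it is where all three hypotheses of part (2) must be combined.
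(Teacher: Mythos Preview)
Your proof is correct and follows essentially the same strategy as the paper's: both use the lower bound $\abs{Df_0(v)}\ge c\abs{v}$ from nonsingularity, the differentiability remainder estimate near $0$, and properness of $f$ together with $f^{-1}(0)=\{0\}$ to handle the delicate regime where $t$ is small but $\abs{x}$ is large. The only difference is packaging: the paper produces an explicit radius $l=l(m)$ with $\abs{h(t,x)}>m$ for $\abs{x}>l$ via a direct case split on the size of $t$, whereas you reach the same conclusion by a sequential contradiction argument.
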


\begin{proof} \emph{Ad (1).} We need to prove that
for any $(t_0,x_0)\in I\times\R^2$ and $\epsilon>0$
there is a neighbourhood $U$ of $(t_0,x_0)$ such that
$\abs{h(t,x)-h(t_0,x_0)}<\epsilon$ for any $(t,x)\in U$.
Set $\epsilon>0$. If $t_0\neq0$ the claim is obvious.
Let $t_0=0$ and $x_0\in\R^2$. We show that there are
$\rho>0$ and $\delta>0$ such that 
$\abs{h(t,x)-h(0,x_0)}<\epsilon$ for 
$t<\rho$ and $\abs{x-x_0}<\delta$.
Since for $t=0$ we have 
$\abs{h(0,x)-h(0,x_0)}=\abs{Df_0(x-x_0)}$,
we can assume that $t\neq0$.
By the differentiability of $f$ at $x=0$, we have
$\lim_{\abs{x}\to0}\frac{f(x)-Df_0(x)}{\abs{x}}=0$.
Observe that for sufficiently small both $t$ and $\abs{x-x_0}$
we have
\begin{multline*}
\abs{h(t,x)-h(0,x_0)}=
\abs{\frac{f(tx)}{t}-Df_0(tx)}\\
\le\abs{\frac{f(tx)-Df_0(tx)}{\abs{tx}}}\abs{x}
+\abs{Df_0(x)-Df_0(x_0)}\le\frac\epsilon2+\frac\epsilon2=\epsilon,
\end{multline*}
which is our claim.\vspace{1mm}

\noindent \emph{Ad (2).} It is enough to show 
that for every $m>0$ there is $l>0$ such that 
$\abs{h(t,x)}>m$ for all $t\in I$ and $\abs{x}>l$.
Observe that
\begin{enumerate}[(a)]
	\item there is $\epsilon>0$ such that 
	$\abs{Df_0(x)}\ge\epsilon\abs{x}$ for any $x\in\R^2$,
	\item there is $\delta_1>0$ such that 
	$\abs{f(x)-Df_0(x)}\le\frac\epsilon2\abs{x}$
	for any $\abs{x}<\delta_1$,
	\item from (a) and (b) for any $\abs{x}<\delta_1$ we have
	$\abs{f(x)-Df_0(x)}\le\frac12\abs{Df_0(x)}$ and,
	in consequence, $\abs{f(x)}\ge\frac12\abs{Df_0(x)}$,
	\item there is $m_1\in(0,m)$ such that $\abs{f(x)}>m_1$
	for any $\abs{x}\ge\delta_1$,
	\item	there is $\delta_2>0$ such that $\abs{f(x)}>m$
	for any $\abs{x}\ge\delta_2$.
\end{enumerate}
Set $t_1:=m_1/m$ and $l=:\max{\{\delta_2/t_1,2m/\epsilon\}}$.
Let $\abs{x}>l$. Now we only need to consider 
the following three cases.\vspace{1mm}

\noindent Case $t_1\le t\le1$. Since $\abs{tx}>\delta_2$,
we get $\abs{h(t,x)}\ge\abs{f(tx)}>m$, by (e).\vspace{1mm}

\noindent Case $0<t<t_1$. If $\abs{tx}\ge\delta_1$ then
$\abs{h(t,x)}>\frac{m_1}{t_1}=m$ 
from (d) and the definition of $t_1$.
If $\abs{tx}<\delta_1$ then
\[
\abs{h(t,x)}=\abs{\frac{f(tx)}{t}}\ge
\frac12\abs{\frac{Df_0(tx)}{t}}=\frac12\abs{Df_0(x)}>m
\]
from (a), (c) and the definition of $l$.\vspace{1mm}

\noindent Case $t=0$. We have 
$\abs{h(0,x)}=\abs{Df_0(x)}>2m>m$ from (a)
and the definition of $l$.
\end{proof}

\begin{rem}
Lemma~\ref{lem:proof2} is true for maps $f\colon\R^n\to\R^k$,
but the assumption that $Df_0$ is nonsingular must be replaced
with $\rank Df_0=n$.
\end{rem}

\begin{cor}\label{cor:proof2}
Let $f=\nabla\varphi$ be generic
and $f^{-1}(0)=\{p\}$. Then $f$ is proper gradient homotopic to 
the Hessian map $\Hess_p\!\varphi\colon\R^2\to\R^2$. 
\end{cor}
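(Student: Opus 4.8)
The plan is to read this off Lemma~\ref{lem:proof2} (Milnor's trick) applied to $f=\nabla\varphi$. First I would translate coordinates so that $p=0$; this changes neither the property of being a proper gradient map nor the Hessian at the critical point, and it puts us in the situation $f(0)=0$ required by Lemma~\ref{lem:proof2}. Since $\varphi$ is $C^2$, the field $f=\nabla\varphi$ is $C^1$, hence differentiable at $0$, with $Df_0$ equal to the Hessian matrix $\Hess_0\!\varphi$. Because $f$ is generic and $f^{-1}(0)=\{0\}$, the critical point $0$ is nondegenerate, so $Df_0$ is invertible (and symmetric); moreover $f$ is proper. Thus all the hypotheses of Lemma~\ref{lem:proof2}(2) are met.

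Next I would introduce the homotopy of Lemma~\ref{lem:proof2}, that is, $h(t,x)=f(tx)/t$ for $t\neq0$ and $h(0,x)=Df_0(x)$. By Lemma~\ref{lem:proof2}, $h$ is continuous and proper; in particular $h^{-1}(0)$ is compact and $h$ is a proper homotopy. It then remains only to verify that each slice $h_t$ is gradient. For $t\neq0$ one computes directly
\[
h_t(x)=\frac{\nabla\varphi(tx)}{t}=\nabla_x\!\left(\frac{\varphi(tx)}{t^2}\right),
\]
while $h_0(x)=Df_0(x)$ is the gradient of the quadratic form $x\mapsto\frac12\,\Hess_0\!\varphi(x,x)$. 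Hence $h$ is a proper gradient homotopy. Since $h_0=Df_0=\Hess_p\!\varphi$ (after undoing the translation), $h_1=f$, and the relation of being proper gradient homotopic is symmetric, we conclude $f\sim\Hess_p\!\varphi$, as claimed.

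The only substantive point is the properness of $h$, which is precisely Lemma~\ref{lem:proof2}(2), so I do not anticipate any real difficulty: the statement is essentially a repackaging of Milnor's trick. The one thing to watch is that rescaling the variable by $t$ must be matched by the $t^{-2}$ rescaling of the potential, so that the time-slices remain gradient vector fields; everything else is bookkeeping.
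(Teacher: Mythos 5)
Your argument is correct and is exactly the route the paper intends: the corollary is placed immediately after Lemma~\ref{lem:proof2} precisely so that it follows by applying Milnor's trick to $f=\nabla\varphi$ (translated so that $p=0$), with the key observation that $h_t(x)=f(tx)/t=\nabla_x\bigl(\varphi(tx)/t^2\bigr)$ keeps every slice gradient while Lemma~\ref{lem:proof2}(2) supplies properness from the nondegeneracy of $\Hess_p\!\varphi$. The paper leaves these verifications implicit, so there is nothing to add.
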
 

\subsection{Raising and lowering critical points}
\begin{lem}\label{lem:bump}
Let $\varphi\colon U\subset\R^2\to\R$ be a $C^2$ function
such that $\nabla\varphi$ is generic 
and $0$ be a saddle point of $\varphi$. Then there are 
a neighbourhood $U'\subset U$ of $0$ such that 
$\cl U'\subset U$ and a Morse function
$\psi\colon U\subset\R^2\to\R$ such that
\begin{itemize}
	\item $0$ is a saddle and the only critical point of $\psi$ in $U'$,
	\item $\restrictionmap{\psi}{U\setminus\cl U'}=
	\restrictionmap{\varphi}{U\setminus\cl U'}$,
	\item $\psi(0)<\varphi(0)$.
\end{itemize}
\end{lem}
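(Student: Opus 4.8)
The plan is to obtain $\psi$ from $\varphi$ by subtracting a small, nonnegative bump function that is \emph{constant} in a neighbourhood of $0$. Such a modification merely shifts the germ of $\varphi$ at $0$ rigidly downwards, so that $0$ stays a nondegenerate saddle, while the smallness of the bump will prevent any new critical point from appearing in the transition annulus where the bump is cut off.

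First I would fix the scales. Since $\nabla\varphi$ is generic, the critical points of $\varphi$ are nondegenerate, hence isolated, so there is $\delta_2>0$ with $D_{\delta_2}(0)\subset U$ for which $0$ is the only critical point of $\varphi$ in $D_{\delta_2}(0)$. Choose any $\delta_1\in(0,\delta_2)$ and set $U':=B_{\delta_2}(0)$, so that $\cl U'=D_{\delta_2}(0)\subset U$. On the compact annulus $K:=D_{\delta_2}(0)\setminus B_{\delta_1}(0)$ the continuous map $\nabla\varphi$ is nowhere zero, so
\[
m:=\min_{x\in K}\abs{\nabla\varphi(x)}>0.
\]
Next I would take a $C^\infty$ function $\beta\colon U\to[0,1]$ with $\beta\equiv1$ on $D_{\delta_1}(0)$ and $\beta\equiv0$ on $U\setminus B_{\delta_2}(0)$, put $M:=\max_{x\in K}\abs{\nabla\beta(x)}$ (positive, since $\beta$ is nonconstant on $K$), fix a constant $c\in(0,M^{-1}m)$, and define $\psi:=\varphi-c\beta$. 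Then $\psi$ is $C^2$, it coincides with $\varphi$ on $U\setminus\cl U'$ (where $\beta$ vanishes), and $\psi(0)=\varphi(0)-c<\varphi(0)$, which already yields the first and third required properties.

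It remains to analyse the critical points of $\psi$. On $B_{\delta_1}(0)$ the function $\beta$ is constant, so $\nabla\beta\equiv0$ there, and by continuity $\nabla\beta\equiv0$ on all of $D_{\delta_1}(0)$; hence $\nabla\psi=\nabla\varphi$ on $D_{\delta_1}(0)$, whose only zero is $0$, and $\Hess_0\psi=\Hess_0\varphi$ is invertible with $\sign\Hess_0\psi=0$, so $0$ is still a saddle. On the annulus $K$ one estimates
\[
\abs{\nabla\psi(x)}\ge\abs{\nabla\varphi(x)}-c\,\abs{\nabla\beta(x)}\ge m-cM>0,
\]
so $\psi$ has no critical point in $K$. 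Since $\cl U'=D_{\delta_1}(0)\cup K$, the point $0$ is the only critical point of $\psi$ in $\cl U'$, which is the second required property. Finally, on $U\setminus\cl U'$ the function $\psi$ equals the Morse function $\varphi$, so, combining the three regions, every critical point of $\psi$ in $U$ is nondegenerate; thus $\psi$ is a Morse function, as wanted.

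The argument involves no serious difficulty; the one step that genuinely needs care is the gradient estimate on the transition annulus $K$, ensuring that cutting off $\beta$ creates no spurious critical point there. This is exactly what forces the choice $c<m/M$, and it is the reason for isolating $0$ inside $D_{\delta_2}(0)$ away from the other critical points of $\varphi$ at the outset.
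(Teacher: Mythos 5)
Your proof is correct and follows essentially the same route as the paper, which simply subtracts a small bump function supported in a small disc around the saddle (citing Matsumoto). Your version is a careful elaboration of that one\babelhyphen{hard}line argument, with the sensible refinement of using a plateau bump (constant near $0$) so that $\nabla\psi=\nabla\varphi$ near the saddle and the estimate $c<m/M$ on the transition annulus rules out new critical points.
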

 
\begin{proof}
Choose a sufficiently small disc around $0$ and define a bump function $\mu$
centered at $0$ with compact support contained in that disc.
The function $\psi=\varphi-\mu$ has required properties
(compare~\cite[Thm 2.34]{M}).
\end{proof}

\subsection{Gradient fields on curvelinear quadrangles}
In the next lemmas $A=x_1x_2x_3x_4$ denotes
a smooth curvelinear quadrangle in the plane
with right angles at the corners
and $B=X_1X_2X_3X_4=I\times I$ (the unit square).
The following result may be viewed as a version of
the Schoenflies theorem for such quadrangles. 
\begin{lem}\label{lem:square}
There is a diffeomorphism
$\theta\colon B\to A$ such that $\theta(X_i)=x_i$
for $i=1,2,3,4$. 
\end{lem}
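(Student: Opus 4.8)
The plan is to build the diffeomorphism $\theta\colon B\to A$ in stages, first normalizing the curvilinear quadrangle $A$ near its boundary and then filling in the interior. Since $A$ is a smooth curvilinear quadrangle with right angles at the corners $x_1,x_2,x_3,x_4$, its boundary $\partial A$ consists of four smooth arcs meeting orthogonally. First I would parametrize each of the four boundary arcs of $A$ proportionally to arc length, obtaining a homeomorphism $\partial B\to\partial A$ that sends each corner $X_i$ to $x_i$ and is smooth on each closed edge. The orthogonality of the corners of both $A$ and $B$ is what allows these four edge-parametrizations to be glued into a map that is smooth (not merely continuous) across the corners, after a standard reparametrization near each $X_i$ flattening the speed to match.

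Next I would thicken this boundary map to a collar. Using a tubular-neighbourhood (collar) theorem for the smooth manifold-with-corners $A$, one gets a diffeomorphism from a neighbourhood of $\partial B$ in $B$ onto a neighbourhood of $\partial A$ in $A$ extending the boundary map; near each corner one uses that both quadrangles locally look like the standard quadrant $[0,\infty)^2$, so the corner charts match up and the collar can be built corner-first, then edge-by-edge, then patched together with a partition of unity subordinate to the four edges and four corners. This produces a diffeomorphism $\theta_0$ defined on an annular neighbourhood $N$ of $\partial B$ in $B$ onto an annular neighbourhood of $\partial A$, with $\theta_0(X_i)=x_i$.

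Finally I would extend $\theta_0$ over the interior. The complement $B\setminus N$ is a smoothly embedded closed disc, and its image under $\theta_0$ on $\partial(B\setminus N)$ bounds the closed disc $A\setminus\theta_0(N)$; by the smooth Schoenflies theorem in the plane (or simply by the fact that any diffeomorphism of $S^1$ extends to a diffeomorphism of $D^2$, applied after a collar), $\theta_0\vert_{\partial(B\setminus N)}$ extends to a diffeomorphism of the two discs, and this extension can be chosen to agree with $\theta_0$ on a smaller collar so that the two pieces glue to a global diffeomorphism $\theta\colon B\to A$ with $\theta(X_i)=x_i$ for $i=1,2,3,4$.

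The main obstacle is the corner analysis: ensuring that the separately defined edge-parametrizations and edge-collars assemble into something \emph{smooth} across the four right-angled corners rather than merely $C^0$. This is where the hypothesis of right angles at the corners of both $A$ and $B$ is essential — it lets one model every corner on the standard first quadrant and reduce the gluing to a local statement about diffeomorphisms of $[0,\infty)^2$ fixing the boundary near the vertex, which is routine but must be set up carefully.
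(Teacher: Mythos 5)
The paper states Lemma~\ref{lem:square} without proof, presenting it simply as ``a version of the Schoenflies theorem'' for curvilinear quadrangles, so there is no argument of the authors' to compare yours against. Your sketch is the standard way to actually prove such a statement, and it is sound: parametrize the four edges compatibly, thicken to a collar of the boundary (built corner-first using that both $A$ and $B$ are locally modelled on the quadrant $[0,\infty)^2$, which is exactly where the right-angle hypothesis enters), and then fill in the interior by the two-dimensional smooth Schoenflies theorem, i.e.\ the fact that every diffeomorphism of $S^1$ extends to a diffeomorphism of $D^2$, arranged to agree with the collar map near the boundary. You also correctly identify the only genuinely delicate point, namely that the edge parametrizations and edge collars must assemble \emph{smoothly} across the corners; arc-length speed alone does not guarantee this, and one must adjust the parametrizations near each vertex so that the boundary map is the restriction of a local diffeomorphism of quadrants. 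For the intended application (Remark~\ref{rem:field} and Lemma~\ref{lem:field}) one additionally wants $\theta$ to carry the horizontal and vertical foliations of $B$ to curves meeting $\partial A$ orthogonally with prescribed boundary parametrizations; your construction accommodates this if the collar of $\partial A$ is taken to be the normal (orthogonal-trajectory) collar of the two arcs $\arc{x_1x_4}$ and $\arc{x_2x_3}$, which is worth stating explicitly but requires no new idea.
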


\begin{rem}\label{rem:field}
The assertion of the above lemma can be 
strengthened in the following way.
Let $\gamma'\colon I\to\arc x_1x_4$
and $\gamma''\colon I\to\arc x_2x_3$
be any smooth parametrizations.
We can guarantee that the diffeomorphism $\theta$
satisfies for all $s\in I$ the following conditions:
\begin{itemize}
	\item $\theta(0,s)=\gamma'(s)$
	and $\theta(1,s)=\gamma''(s)$,
	\item  the curves $\theta(s\times I)$ and $\theta(I\times s)$
	are perpendicular to the respective sides of $A$.
\end{itemize}
\end{rem}

\begin{lem}\label{lem:nonvan}
Assume that $w\colon\partial B\to\R^2$ is a continuous nonvanishing
vector field such that
\begin{itemize}
	\item $w(0,y)=w(1,y)=(0,1)$ for all $y\in I$,
	\item $w(x,0)=(0,w'(x))$ and $w(x,1)=(0,w''(x))$, 
	where $w',w''\colon$ $I\to\R$ are $C^1$ functions.
\end{itemize}
Then there is a $C^1$ function $\psi\colon B\to\R$
such that $\nabla\psi$ is nonvanishing on $B$
and $\restrictionmap{\nabla\psi}{\partial B}=w$.
\end{lem}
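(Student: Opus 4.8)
The plan is to construct $\psi$ explicitly as an integral of a nonvanishing vector field obtained from $w$ by a deformation on $\partial B$, then extend inward. First I would reduce to a normalized boundary datum: using that $w$ is nonvanishing on the connected set $\partial B$ and homotopic through nonvanishing fields (the prescribed values $(0,1)$ on the two vertical sides make the winding number of $w$ along $\partial B$ equal to zero), I can homotope $w$ rel nothing to the constant field $(0,1)$; but since I actually need a \emph{gradient}, the better route is to directly prescribe $\psi$ on $\partial B$ and take its tangential derivative. Concretely, I would look for $\psi$ of the form $\psi(x,y)=y\cdot a(x,y)+b(x,y)$ adapted so that $\partial\psi/\partial y$ matches the second component of $w$ and $\partial\psi/\partial x$ matches the first; the conditions $w(0,y)=w(1,y)=(0,1)$ say $\psi$ is (up to an additive constant) equal to $y$ on the vertical sides, while on the horizontal sides $w=(0,w')$ resp. $(0,w'')$ forces $\psi$ to be constant in $x$ there.

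The key steps, in order, are: (i) define $\psi$ on $\partial B$ by integrating the tangential component of $w$ around $\partial B$ starting from $X_1=(0,0)$ — this is well-defined (single-valued) precisely because the total circulation $\oint_{\partial B} w\cdot d\ell$ can be arranged to vanish; in fact with the given normalizations one computes the tangential integrals along the four sides and checks they telescope to zero, possibly after first adding to $w$ a controlled perturbation supported near the corners that does not affect nonvanishing; (ii) having $\psi|_{\partial B}$ together with the normal derivative data (the normal component of $w$ on each side), extend $\psi$ to a $C^1$ function on $B$ — near each side use a collar $\partial B\times[0,\delta)$ and set $\psi(p,s)=\psi_0(p)+s\,(w\cdot\nu)(p)+O(s^2)$ with a smooth cutoff, and on the interior fill in arbitrarily; (iii) check $\nabla\psi\ne 0$ on $B$: on the collars this follows because $\nabla\psi$ there is a small perturbation of $w\ne 0$, and on the compact remaining interior region one invokes that we may choose the interior extension to be, e.g., a small positive multiple of $(0,1)$ glued in — but one must verify the gluing keeps the gradient nonzero, which is where a partition-of-unity argument alone is not enough.

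The main obstacle I expect is step (iii): a $C^1$ function whose gradient is prescribed and nonvanishing on $\partial B$ and on collars need not have nonvanishing gradient in the interior — interior critical points can appear when you patch two collar expressions together. To handle this I would exploit the rectangular product structure of $B=I\times I$: split $\psi=\psi_1(x)+\psi_2(y)$ as a leading ansatz won't respect the boundary data, so instead I would use the diffeomorphism/Schoenflies-type normalization so that all the action is along one coordinate. The cleanest implementation: define $u\colon\partial B\to S^1$ by $u=w/|w|$; since $u$ equals the constant $(0,1)$ on both vertical sides, $u$ descends to a map from the quotient of $\partial B$ collapsing those two sides, and a direct homotopy extension shows $u$ extends to $\bar u\colon B\to S^1$ with $\bar u\equiv(0,1)$ on a neighborhood of the two vertical sides; then set the candidate field $F=\lambda\cdot\bar u$ with a positive scalar function $\lambda$ chosen so that $F$ is a gradient — here I solve $\partial_x(\lambda\bar u_1)=\partial_y(\lambda\bar u_2)$, a first-order linear PDE for $\log\lambda$ along the foliation defined by $\bar u$, which is solvable because on $B$ (simply connected) the closedness condition can be met, and then $\psi=\int F$. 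Nonvanishing is automatic since $|F|=\lambda>0$, and matching $\restrictionmap{\nabla\psi}{\partial B}=w$ is arranged by fixing $\lambda=|w|$ on $\partial B$. The remaining routine work is verifying the PDE for $\lambda$ has a solution with the prescribed boundary values and the required $C^1$ regularity, which I would carry out by integrating along the flow lines of $\bar u$ and using the product structure near the vertical sides where $\bar u$ is constant.
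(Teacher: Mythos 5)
Your final construction --- extend the unit field $u=w/\lvert w\rvert$ to $\bar u\colon B\to S^1$ and then solve for a positive scalar $\lambda$ making $F=\lambda\bar u$ a gradient with $\lambda=\lvert w\rvert$ on $\partial B$ --- does not work as stated, and it is the load-bearing step of your argument. First, the integrability condition for $F=\lambda\bar u$ is the curl-free equation $\partial_y(\lambda\bar u_1)=\partial_x(\lambda\bar u_2)$, not the divergence identity you wrote; more importantly, it is a transport equation for $\log\lambda$ along the trajectories \emph{orthogonal} to $\bar u$, so $\lambda$ is determined by one value per trajectory and cannot be prescribed freely on all of $\partial B$. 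In the situation at hand this is fatal: continuity at the corners forces $w'(0)=w'(1)=w''(0)=w''(1)=1$, and nonvanishing then gives $w'>0$, $w''>0$, so $u\equiv(0,1)$ on \emph{all} of $\partial B$; the extension your homotopy argument produces is (or may as well be) $\bar u\equiv(0,1)$, for which the curl-free condition reads $\partial_x\lambda=0$, i.e.\ $\lambda=\lambda(y)$ --- incompatible with the required trace $\lambda(x,0)=w'(x)$, $\lambda(x,1)=w''(x)$ unless $w'$ and $w''$ are constant. The underlying point is that the direction field of a gradient extension cannot be chosen independently of the boundary magnitudes: if $w'$ is nonconstant then $\partial_x\psi$ must be nonzero somewhere in the interior, so no field of the form $\lambda\bar u$ with an arbitrarily chosen topological extension $\bar u$ of $u$ can match $w$. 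Salvaging this route would require choosing $\bar u$ with prescribed normal derivatives along the horizontal edges and verifying a global consistency condition along each orthogonal trajectory, none of which you address. Your first route (integrate the tangential component to get $\psi$ on $\partial B$ --- consistent, as you note, since the circulation vanishes, and no corner perturbation is needed --- then extend by collars) stalls exactly where you say it does: nothing rules out interior critical points, and you do not close that gap.

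The missing idea, which the paper uses, is to build $\psi$ so that $\partial\psi/\partial y>0$ on all of $B$, making nonvanishing automatic. Concretely, the paper takes $\psi(x,y)=y+(w'(x)-1)u(x,y)$ on the pinched region $\{y\le a(x)\}$ with $a(x)=x(1-x)$ (symmetrically near the top edge, and $\psi=y$ in between), where $u(x,y)=\int_0^y\mu(t/a(x))\,dt$ for a profile $\mu$ with $\mu(0)=1$, $\mu(1)=0$, $\int_0^1\mu=0$ and $-1/m\le\mu\le1$, $m=\max\{w',w''\}$. The mean-zero condition glues the pieces $C^1$, the pinching at $x=0,1$ recovers $(0,1)$ on the vertical sides, one checks $\partial\psi/\partial y(x,0)=w'(x)$ and $\partial\psi/\partial x(x,0)=0$, and the bounds on $\mu$ give $\partial\psi/\partial y=1+(w'(x)-1)\mu(y/a(x))>0$ everywhere. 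This explicit device is precisely what your proposal lacks.
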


\begin{proof}
Let $m:=\max{\{w'(x),w''(x)\mid x\in I\}}$, $a(x):=x(1-x)$ and 
$\mu\colon I\to\R$ be a continuous function such that
$\mu(0)=1$, $\mu(1)=0$, $-1/m\le\mu\le1$ and $\int_0^1\mu(t)dt=0$. 
Set $C:=\{(x,y)\in B\mid y\le a(x)\}$. Let us define a function
$u\colon C\to\R$ by
\[
u(x,y)=\begin{cases}
0& \text{if $x\in\{0,1\}$},\\
\int_0^y\mu(t/a(x))dt& \text{otherwise},
\end{cases}
\]
and a function $\psi\colon B\to\R$ by 
\[
\psi(x,y)=y+\begin{cases}
(w'(x)-1)u(x,y)& \text{if $y\le a(x)$},\\
0& \text{if $a(x)<y<1-a(x)$},\\
(1-w''(x))u(x,1-y)& \text{if $1-a(x)\le y$}.
\end{cases}
\]
One can check that $\psi$ satisfies
the assertion of our lemma.
In particular, $\nabla\psi$ is nonvanishing,
because $\frac{\partial\psi}{\partial y}>0$ for $(x,y)\in B$.
\end{proof}

\begin{lem}\label{lem:field}
Let $v\colon\partial A\to\R^2$ be
a continuous nonvanishing vector field such that
\begin{itemize}
	\item $\restrictionmap{v}{\arc x_1x_2}$ and $\restrictionmap{v}{\arc x_3x_4}$
	are perpendicular to $\partial A$ and smooth,
	\item $\restrictionmap{v}{\arc x_1x_4}$ and $\restrictionmap{v}{\arc x_2x_3}$
	are tangent to $\partial A$,
	\item $\int_{\arc x_1x_4}\!\abs{v} ds=\int_{\arc x_2x_3}\!\abs{v} ds=1$.
\end{itemize}
Then there is a gradient nonvanishing extension
of $v$ to $A$.
\end{lem}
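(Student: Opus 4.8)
The plan is to transport the problem from the curvilinear quadrangle $A$ to the unit square $B$ via the diffeomorphism $\theta$ supplied by Lemma~\ref{lem:square} and Remark~\ref{rem:field}, build the desired gradient field on $B$ using Lemma~\ref{lem:nonvan}, and push it back to $A$. The only subtlety is that pulling back a gradient field under a diffeomorphism does not in general give a gradient field, so the pullback has to be done at the level of potentials, not vector fields, and the boundary normalization hypotheses (the perpendicularity on $\arc x_1x_2$, $\arc x_3x_4$ and the integral-one conditions on $\arc x_1x_4$, $\arc x_2x_3$) are exactly what is needed to make the glued-together object $C^1$.

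First I would fix the parametrizations: choose $\gamma'\colon I\to\arc x_1x_4$ and $\gamma''\colon I\to\arc x_2x_3$ to be the arc-length parametrizations scaled to total length $1$ (possible by the third hypothesis), and let $\theta\colon B\to A$ be the diffeomorphism from Remark~\ref{rem:field} associated to these parametrizations, so that $\theta$ maps the sides $0\times I$ and $1\times I$ onto $\arc x_1x_4$ and $\arc x_2x_3$ with unit speed, maps $I\times 0$ and $I\times 1$ onto $\arc x_1x_2$ and $\arc x_3x_4$, and is ``perpendicular at the boundary'' in the sense of Remark~\ref{rem:field}. Next I would define a boundary field $w\colon\partial B\to\R^2$ on the square by pulling back $v$ through the potential-level identification: on $0\times I$ and $1\times I$, since $v$ is tangent to $\partial A$ there and has $|v|\,ds$-integral $1$, the function $s\mapsto v(\gamma'(s))$ is parallel to $\gamma'{}'(s)$, which under $\theta$ corresponds to the constant field $(0,1)$; the normalization $\int|v|\,ds=1$ is precisely what forces the proportionality constant to be $1$ after the unit-speed reparametrization, so $w(0,y)=w(1,y)=(0,1)$. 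On $I\times 0$ and $I\times 1$, since $v$ is perpendicular to $\partial A$ on $\arc x_1x_2$ and $\arc x_3x_4$ and $\theta$ is perpendicular at the boundary, the pulled-back field points in the $y$-direction, so it has the form $(0,w'(x))$ and $(0,w''(x))$ with $w',w''$ the (smooth, since $v$ is smooth there) scalar functions recording the length of $v$; this is exactly the hypothesis list of Lemma~\ref{lem:nonvan}.

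Then I would invoke Lemma~\ref{lem:nonvan} to get a $C^1$ function $\psi\colon B\to\R$ with $\nabla\psi$ nonvanishing on $B$ and $\nabla\psi|_{\partial B}=w$, and set $\varphi:=\psi\circ\theta^{-1}\colon A\to\R$. The field $\nabla\varphi$ is gradient by construction and nonvanishing on $A$ (because $\nabla\psi\neq0$ and $\theta$ is a diffeomorphism). It remains to check that $\nabla\varphi|_{\partial A}=v$; the tangential components agree because $\theta$ carries the sides of $B$ to the corresponding arcs of $A$ and $w$ was defined to encode the tangential data of $v$ there, and on the two arcs where $v$ is normal the normal component of $\nabla\varphi$ equals $w'$ (resp. $w''$) under the metric distortion of $\theta$ — here I would use that $\theta$ meets the boundary perpendicularly, so $D\theta$ at such a boundary point is (in an adapted orthonormal frame) diagonal, and one arranges the normalization so the diagonal entry on the normal direction matches; this is the routine verification I would not spell out in full. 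The step I expect to be the genuine obstacle, or at least the one requiring the most care, is confirming that the identification ``$v\leftrightarrow w$'' is consistent at the four corners $x_i$ — i.e. that the two one-sided boundary descriptions of $v$ (tangent on one incident arc, perpendicular on the other) match the corner behaviour of the $w$ produced for Lemma~\ref{lem:nonvan}, which is why the square-side construction there was set up with $w(0,y)=w(1,y)=(0,1)$ constant near the corners and $w(x,0)$, $w(x,1)$ of the form $(0,\,\cdot\,)$: the corner values are forced to be $(0,1)$ from both sides, so the glued field is continuous and, after the $C^1$ extension inside $B$, the pullback $\nabla\varphi$ is the desired nonvanishing gradient extension of $v$ to $A$. \qed
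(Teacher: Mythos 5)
Your overall route coincides with the paper's: transport the problem to the unit square via the diffeomorphism $\theta\colon B\to A$ of Lemma~\ref{lem:square} and Remark~\ref{rem:field}, pull $v$ back as a covector by setting $w(z)=(D\theta(z))^T\,v(\theta(z))$ on $\partial B$, apply Lemma~\ref{lem:nonvan} to obtain a potential $\psi$ on $B$, and push the \emph{potential} (not the field) forward, $\varphi=\psi\circ\theta^{-1}$, so that $\nabla\varphi=(D\theta^{-1})^T(\nabla\psi\circ\theta^{-1})$ restricts to $v$ on $\partial A$. Your worry about the corners is also resolved the way you guessed: since $w$ is given by the single formula $(D\theta)^T(v\circ\theta)$ on all of $\partial B$, continuity at the four corners is automatic, and the right angles of $A$ make the two one-sided descriptions of $v$ at each $x_i$ consistent.

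There is, however, one step that fails as written: the choice of the parametrizations $\gamma'$ and $\gamma''$. You take them to be arc length rescaled to total length $1$. With that choice, on the side $0\times I$ the tangential component of $w$ is $\langle(\gamma')'(s),v(\gamma'(s))\rangle=\pm L\,\abs{v(\gamma'(s))}$, where $L$ is the length of $\arc x_1x_4$; by the third hypothesis this \emph{integrates} to $1$ over $I$, but it is not pointwise equal to $1$ unless $\abs{v}$ is constant along the arc. So the hypothesis $w(0,y)=w(1,y)=(0,1)$ of Lemma~\ref{lem:nonvan} is not met, and the normalization $\int\abs{v}\,ds=1$ only controls the average, not the value. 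The correct choice --- the one the paper makes --- is to parametrize by the measure $\abs{v}\,ds$ itself: define $\gamma'(c)$ by $\int_{\arc x_1\gamma'(c)}\abs{v}\,ds=c$, and similarly $\gamma''$, so that $\abs{(\gamma')'(c)}\,\abs{v(\gamma'(c))}\equiv1$ and hence $w(0,y)=w(1,y)=(0,1)$ exactly (after fixing the orientation so that $v(x_1)$ agrees with that of $\arc x_1x_4$). This is precisely where the third hypothesis enters; with that one replacement your argument goes through and matches the paper's proof.
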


\begin{proof}
First we uniquely fix parametrizations
$\gamma'\colon I\to\arc x_1x_4$ and
$\gamma''\colon I\to\arc x_2x_3$ so that for each $c\in I$
we have
\[
\int\limits_{\arc x_1\gamma'(c)}\!\!\!\!\!\!\!\!\!\abs{v} ds=
\!\!\!\!\!\!\!\!\!
\int\limits_{\arc x_2\gamma''(c)}\!\!\!\!\!\!\!\!\!\abs{v} ds=c.
\]
Let $\theta\colon B\to A$ be a diffeomorphism
satisfying the conditions mentioned in Lemma~\ref{lem:square} 
and Remark~\ref{rem:field}. Without loss of generality we may assume
that $v(x_1)$ agrees with the orientation of $\arc x_1x_4$.

Define $w\colon\partial B\to\R^2$ by $w(z)=(D\theta(z))^T\cdot v(\theta(z))$.
Note that $w$ satisfies the assumptions of Lemma~\ref{lem:nonvan}.
Hence there is a $C^1$ function $\psi\colon B\to\R$
such that $\nabla\psi$ is nonvanishing on $B$
and $\restrictionmap{\nabla\psi}{\partial B}=w$.

Finally, define $\varphi\colon A\to\R$ by 
$\varphi(z)=\psi(\theta^{-1}(z))$.
Obviously $\nabla\varphi$ is nonvanishing on $A$. 
Moreover, for $z\in\partial A$
\[
\nabla\varphi(z)=
(D\theta^{-1}(z))^T\cdot\nabla\psi(\theta^{-1}(z))=
(D\theta^{-1}(z))^T\cdot w(\theta^{-1}(z))=v(z),
\]
which completes the proof.
\end{proof}

\end{document}